\def\1{\mathbbm{1}}
\newtheorem{definition}{Definition}[section]
\newtheorem{remark}[definition]{Remark}
\newtheorem{proposition}[definition]{Proposition}
\newtheorem{theorem}[definition]{Theorem}
\newtheorem{lemma}[definition]{Lemma}
\newtheorem{example}[definition]{Example}
\numberwithin{equation}{section}
\newcommand{\T}{{\mathbb T}}                   
\newcommand{\B}{{\mathbb B}}
\def\al{\alpha}
\def\om{\omega}
\def\Om{\Omega}
\def\ga{\gamma}
\def\la{\lambda}
\def\La{\Lambda}
\DeclareMathOperator{\tr}{tr}
\def\calL{{\mathcal{L}}}
\def\calB{{\mathcal{B}}}
\def\calC{{\mathcal{C}}}
\def\calF{{\mathcal{F}}}
\def\P{\mathbb P}
\def\E{\mathbb E}
\def\T{\mathbb T}
\sloppy \pagestyle{plain} \textwidth=16cm \textheight=22.5cm
\title{Solving stochastic equations with unbounded nonlinear perturbations}
\author{M. Fkirine and S. Hadd}
\address{Department of Mathematics, Faculty of Sciences, Ibn Zohr University, Hay Dakhla, B.P. 8106, 80000 Agadir, Morocco; fkirinemohamed@gmail.com, s.hadd@uiz.ac.ma}
\thanks{The first author was supported by National Center for Scientific and Technical Research
  	(CNRST), Morocco.}
\subjclass[2010]{60H15, 35F20, 47H14, 35R60, 35B25, 30K40}
\keywords{Semilinear stochastic equations, Unbounded nonlinear perturbation, Hilbert space, Semigroup, Equations with delays}
\begin{document}
\maketitle

\renewcommand{\sectionmark}[1]{}

\begin{abstract}
This paper is interested in semilinear stochastic equations having unbounded nonlinear perturbations in the deterministic part and/or in the random part. Moreover the linear part of these equations is governed by a not necessarily analytic semigroup. The main difficulty with these equations is how to define the concept of mild solutions due to the chosen type of unbounded perturbations. To overcome this problem, we first proved a regularity property of the stochastic convolution with respect to the domain of ``admissible" unbounded linear operators (not necessarily closed or closable). This is done using Yosida extensions of such unbounded linear operators. After proving the well-posedness of these equations, we also establish the Feller property for the corresponding transition semigroups. Several examples like heat equations and schr\"odinger equations with nonlocal perturbations terms are given. Finally, we give an application to a general class of semilinear neutral stochastic equations.
\end{abstract}

\maketitle

\section{Introduction}
We are concerned with the well-posedness and Feller property of two classes of abstract stochastic equations under unbounded nonlinear perturbations. We will work with generators that are not necessarily analytic, so we do not assume any maximal regularity property for the linear part of the stochastic equation. More precisely, we consider the following abstract stochastic equations
\begin{align}\label{Flandoli-exa}
dX(t)=AX(t)dt+\mathscr{M}(\mathscr{B} X(t))dW(t),\quad X(0)=\xi,\quad t\in [0,T],
\end{align}
and

 \begin{eqnarray}\label{0}
\begin{cases}
dX(t)=[AX(t)+F(\mathscr{C}X(t))]dt+\mathscr{M}(X(t))dW(t), & t\in[0,T]\\
X(0)=\xi
\end{cases}
\end{eqnarray}
in a real separable Hilbert space $H$. Here, $A:D(A)\subset H\to H$ is the generator of a strongly continuous semigroup $\T:=(T(t))_{t\geq0}$ on $H$ (not necessarily analytic), $\mathscr{C},\mathscr{B}: \mathscr{Z}\subset H\to H$ are linear operators (not necessarily closed or closable nor represented by fractional operators), $D(A)\subset \mathscr{Z}\subset H,$ $F:H\to H$ and $\mathscr{M}:H\to L_2^0(V,H)$ are nonlinear mappings, where $L_2^0(V,H)$ is the space of all Hilbert-Schmidt operators acting between $Q^\frac{1}{2}(V)$ and $H$, see notation below. Moreover, $W(t)$ is a $Q$-Wiener process in $V$ defined on a filtered probability space $(\Om,\calF,(\calF_t)_{t\ge 0},\P)$. The initial condition $\xi$ can be any $\mathcal{F}_0$-measurable $H$-valued random variable.

By selecting \begin{align}\label{G}G=F\circ \mathscr{C}:\mathscr{Z}\to H,\end{align} the equation \eqref{0} becomes
\begin{eqnarray}\label{****}
\begin{cases}
dX(t)=[AX(t)+G(X(t))]dt+\mathscr{M}(X(t))dW(t), & t\in[0,T]\\
X(0)=\xi.
\end{cases}
\end{eqnarray}
In the particular case of $\mathscr{C}\in\calL(H)$, that is $D(G)=H$, the equation \eqref{****} has been extensively studied. We cannot give a complete description of the literature, however we cite the following references \cite{da2014stochastic}, \cite{Daprato1992},  \cite{ichikawa1982stability} \cite[Chap.4]{Pr-Ro},  \cite{Peszat1995}, and \cite{Michalik1995} in which  the existence and uniqueness of solution as well as the regularity of solutions are established. We also mention that the case when the nonlinear application has a small domain $D(G)=\mathscr{Z}\subset H$ and an image in $\mathscr{Z}$ (this is $G:\mathscr{Z}\to \mathscr{Z}$) is also considered by many authors, see e.g. \cite{gatarek1997}, \cite{maslowski1993},  \cite{manthey1992}, \cite{maslowski1989} and \cite{da1988note}.  The common strategy used in the references mentioned above is to consider the following assumptions: the part of $A$ in $\mathscr{Z}$ generates a strongly continuous semigroup on $\mathscr{Z},$ $G$ is a locally Lipschitz or a continuous function from $\mathscr{Z}$ to $\mathscr{Z},$ and the initial condition $\xi$ is a $\mathscr{Z}$-valued random. As shown in  \cite[Theorem 7.15]{da2014stochastic}, to work with initial conditions $\xi\in L^2(\mathcal{F}_0, H)$, extra assumptions on $G:\mathscr{Z}\to \mathscr{Z}$ are required, and in this case we only talk about a special class of solutions called ``generalized" solutions.  Also the equation \eqref{Flandoli-exa} is studied in \cite[page 176]{da2014stochastic} and \cite{Flandoli1} where the generator $A$ satisfies some maximal regularity properties.

In the present paper, we will use the following assumptions for the semilinear equation \eqref{****}:
\begin{itemize}
  \item [{\bf(S)}] The operator $A:D(A)\subset H\to H$ generates a strongly continuous semigroup $\T:=(T(t))_{t\ge 0}$ on $H,$ such that
  \begin{align*}
\|T(t)\|\le M e^{\beta t},\qquad \forall t\ge 0
\end{align*}
for some $\beta>\om_0(A)$ and $M\ge 1$.
  \item [{\bf(G)}] The nonlinear map $G:\mathscr{Z}\to H$ is given by \eqref{G}.
  \item [{\bf(L)}] $F:H\rightarrow H$, $\mathscr{M}:H\rightarrow L_2^0(V,H)$ are nonlinear, Lipschitz continuous operators, i.e,
$$\|F(x)-F(y)\|+\|\mathscr{M}(x)-\mathscr{M}(y)\|_2\leq k\|x-y\|,~~\text{for all } x,y\in H.$$
\item [{\bf(A)}] The operator $C:=\mathscr{C}$ with domain $D(C):=D(A)$ is an admissible operator for $A$. That is, for some (hence all) $\alpha>0$ there exists $\ga:=\gamma(\alpha)>0$ such that
\begin{equation}\label{adm}
\int_0^\alpha \|CT(t)x\|^2dt\leq \gamma^2 \|x\|^2,\qquad (x\in D(A)).
\end{equation}
\item[{\bf(A')}] $B:=\mathscr{B}$ with domain $D(B):=D(A)$ is a zero-class admissible operator for $A$. That is, $B$ satisfies the condition \eqref{adm} with a constant $\tilde{\ga}(\al)\to 0$ as $\al\to 0$.
\end{itemize}

We mention that in the condition {\bf (G)}, we may have the following situation  $\mathscr{Z}\subsetneqq{\rm Im}(G)\subseteq H$. This case goes beyond the approach used in the references cited above. Also, unlike these references,  we do not assume that the space $\mathscr{Z}$ is stable by the semigroup $\T$, so we can not use the part of $A$ in $\mathscr{Z}$ to define a semigroup on $\mathscr{Z}$. This fact may implies that the determinist and stochastic convolutions may  exceed $\mathscr{Z}$. This offers many difficulties in defining naturally a mild solution for the problem \eqref{****} (or \eqref{0}). To overcome this problem, we will use the fact that $\mathscr{C}$ satisfies the condition {\bf(A)} and the concept of Yosida extensions (see \eqref{Yosida-extension}) to give sense to the eventual solution to the semilinear problem \eqref{0}. In fact, we first show in Theorem \ref{6*} that under the conditions {\bf(A)} and {\bf(S)}, we have
\begin{align*}
W_A^\Phi(t):=\int_0^t T(t-s)\Phi(s)dW(s)\in D(C_\Lambda),\qquad a.e.\;t\ge 0,\quad \mathbb{P}-a.s,
\end{align*}
where $C_\Lambda$ is the Yosida extension of $C$ for $A$. Furthermore, we have the regularity estimate \eqref{Reg-Max}. We note that in the case of analytic semigroup $\T$ and $\mathscr{C}=(-A)^\al$ for some $\al\in (0,\frac{1}{2})$, the above result is proved if \cite[Theorem 6.14]{da2014stochastic}, where the proof is heavily based on properties of analytic semigroups. This case can also considered as a corollary of our result Theorem \ref{6*}, because the analyticity of $\T$ implies that $(-A)^\al$ satisfies the condition {\bf(A)}.

In Theorem \ref{B-theorem}, we assumed that the operator $B$ satisfies the condition {\bf(A')} and proved that the stochastic equation \eqref{Flandoli-exa} has a unique mild solution. The proof is mainly based on Theorem \ref{6*} and a Banach's fixed point theorem. In fact, compared with \cite{bouna}, \cite[Theorem 6.21]{da2014stochastic} and \cite{Flandoli1}, we dot not assume any regularity of the semigroup $\T$. We note that the condition  {\bf(A')} is somehow similar to a condition introduced in  \cite{Flandoli1}.

By using the above result on the stochastic convolution, the assumptions {\bf(A)}, {\bf(L)}, {\bf(G)}, and {\bf(S)},  and  an approximation method, we prove (see Theorem \ref{thm1}) the existence and uniqueness of a process $X\in \mathcal{C}_\mathcal{F}([0,T],H)$ such that $X(t)\in D(C_\La)$ a.e. $t>0,$ $\mathbb{P}$-a.s., and  satisfying
\begin{align*}
X(t)=T(t)\xi+\int^t_0 T(t-s)F(C_\Lambda X(s))ds+\int^t_0 T(t-s)\mathscr{M}(X(s))dW(s)
\end{align*}
for any $t\ge 0$ and $\xi\in L^2(\mathcal{F}_0,H)$. The particular case when $F\in\calL(H)$ is recently proved in \cite[Theorem 3.6]{lahbiri2020}. On the other hand, in the deterministic case, the authors of \cite{thieme2004semilinear} considered the equation \eqref{0} with $\mathscr{M}\equiv 0$ and assumed that $\mathscr{C}$ satisfies {\bf (A)}. They proved the existence and uniqueness of a continuous mild solution, but the above variation constant formula is not established. We also mention that the proof of Theorem \ref{thm1} can also be used for the deterministic case which we consider as an alternative proof to the corresponding proof in \cite{thieme2004semilinear}. Another result in the deterministic case can be found in \cite[Section 11]{curtain2020introduction} and \cite[Chap.7]{lunardi2012analytic}, in which a very particular case is considered.

In Section \ref{sec:feller}, we consider the transition semigroup associated with the equation \eqref{0},
\begin{align*}
P_t \phi=\E\left(\phi(X(t,x))\right)
\end{align*}
for any $t\ge 0,\,x\in H$ and any $\phi\in \calC_b(H),$ the space of all bounded and Borel measurable functions $\phi:H\to \mathbb{R}$. In Proposition \ref{prop2}, a continuous dependence of the solutions on initial conditions is established, which implies the Markov and Feller properties, in particular $P_t \calC_b(H)\subset \calC_b(H)$.

We end this paper with an application to semilinear stochastic equations of neutral type. We will use product spaces to introduce a semigroup approach to the well-posedness of such equations.

In the literature, one can find the typical semilinear stochastic equation
\begin{align}\label{KPZ}
\begin{cases}
dX(t,x)=\left[\partial_x^2X(t,x)+\frac{1}{2}\left(\partial_x X(t,x)\right)^2\right]dt+dW(t,x), & t\in[0,T],\quad x\in\mathbb{R}\\
X(0,x)=\xi(x)\in L^2(\mathbb{R}),
\end{cases}
\end{align}
where $W(t)$ is a $Q$-Wiener process on a Hilbert space $L^2(\mathbb{R})$. In fact, equation \eqref{KPZ} is called the Kardar-Parisi-Zhang equation (KPZ equation) and was introduced in \cite{KPZ1986} as a model of the interface growing in the phase transitions theory. See also \cite{BB-16}, \cite{GL-20}, \cite{Hair-13}, \cite{HQ-18} for more details about such equations. The singular term $(\partial_x X(t,x))^2$ makes the study of the well-posed of the equation \eqref{KPZ} very difficult. In fact, this question was an open problem for over 15 years until Hairer's key paper \cite{Hair-13} appears.
It is known that the equation \eqref{KPZ} can be reformulated as a Burgers equation,
which is investigated in \cite{daprato2007}, see also \cite[Chap.13]{da2014stochastic} and \cite[Chap.14]{Da-Za-Book-96}.

If we take $A=\partial_x^2,$ $f(x)=x^2$, $\mathscr{C}=\partial_x$, and $F(\phi)(x)=f(\phi(x)),$ then the KPZ equation \eqref{KPZ} has the same form as our abstract equation \eqref{0}. The only difference is that in the equation \eqref{0} the function $F$ is locally Lipschitz function. In our case, we only treat the case when $F$ is  global Lipschitz function (see the condition {\bf(L)}).

The organization of this paper is as follows: In Section \ref{sec:2} we first prove a technical result on stochastic convolutions and then use this result to prove the well-posedness of the stochastic equation \eqref{Flandoli-exa}, while in Section \ref{sec:well-posed} we prove the well-posedness of the equation \eqref{0}. In section \ref{sec:feller}, we establish the Feller property of the transition semigroup associated with the solution of the equation \eqref{0}. The last section is devoted to the well-posedness of a large class of neutral stochastic equations.

\noindent {\bf Notation.} Throughout this paper, $H$ denotes a real separable Hilbert space with norm $\|\cdot\|$ and $(A,D(A))$ is the generator of a $C_0$-semigroup $(T(t))_{t\geq0}$ on $H$ with growth bound $\omega_0$.  For $\lambda\in \rho(A)$ (the resolvent set of $A$) we set $R(\lambda,A):=(\lambda-A)^{-1}$. The Yosida extension of a linear operator $C:D(A)\to H$ for $A$ is the following linear operator
\begin{align}\label{Yosida-extension}
\begin{split}
D(C_{\Lambda})&:=\left\{x\in H: ~~\lim_{\lambda\to+\infty}C\lambda R(\lambda,A)x \quad\text{exists in}\quad H\right\},\\
C_{\Lambda}x&:= \lim_{\lambda\in\mathbb{R},\,\lambda\to+\infty}C\lambda R(\lambda,A)x.
\end{split}
\end{align}
Clearly, we have $D(A)\subset D(C_\Lambda)\subset H$ and $C_\Lambda x= C x$ for any $x\in D(A)$.

Let $(\Omega,\mathcal{F},\mathbb{P})$ be a complete probability space equipped with a normal filtration $\{\mathcal{F}_t\}_{t\geq0}$. Let $(\beta_k(t))_k $ be a sequence of real-valued one-dimensional standard Brownian motions mutually independent over $(\Omega,\mathcal{F},\mathbb{P})$. Set
$$W(t)=\sum_{k=0}^\infty\sqrt{\lambda_k}\beta_k(t)e_k, \qquad t\geq0,$$
where $(\lambda_k)_{k>0}$ are nonnegative real numbers and $(e_k)_{k>0}$ is a complete orthonormal basis in $V$. Let $Q\in\mathcal{L}(V)$ be an operator defined by $Qe_k=\lambda_ke_k$ with finite trace $\tr Q=\sum_{k=1}^\infty \lambda_k<\infty$. Then the above $V$-valued stochastic process $W(t)$ is called a $Q$-Wiener process. Denote by $L_2^0(V,H)$ the Hilbert space of all Hilbert-Schmidt operators from $V$ to $H$ equipped with the norm:

\begin{equation*}
\|D\|_2:=\|D\|_{L_2^0(V,H)}=\left(\sum_{k=1}^{+\infty}\|\sqrt{\lambda_k}De_k\|^2\right)^{\frac{1}{2}}, \qquad D\in L_2^0(V,H).
\end{equation*}
Let $L^2(\mathcal{F}_T,H):=L^2(\Omega,\mathcal{F}_T,H)$ the Hilbert space of all $\mathcal{F}_T$-measurable square integrable. $L^2_{\mathcal{F}}([0,T];U)$ is the space of the $\mathcal{F}_t$-adapted, $U$-valued measurable process $u(t,\omega)$ on $[0,T]$ such that $\mathbb{E}\int_0^T\|u(t,\omega)\|^2dt<+\infty$. $\mathcal{C}_\mathcal{F}([0,T];U)$  is the space of the $\mathcal{F}_t$-adapted, $U$-valued measurable process $u(t,\omega)$ on $[0,T]$ such that $\left(\mathbb{E}\|u(t,\omega)\|^2\right)^\frac{1}{2}$ is continuous. For brevity, we suppress the dependence of all mappings on $\omega$ throughout the manuscript.

\section{Regularity of the stochastic convolutions for admissible operators}\label{sec:2}
We start this section by giving some known consequences of the condition {\bf(A)} introduced in the first section. The constant $\ga>0$ in \eqref{adm} is a non-decreasing function on $\al$ and  $\lim_{\alpha\to0}\alpha^r\gamma(\alpha)=0$ for any $r>0$. If $C\in\calL(D(A),H)$ satisfies the condition {\bf(A)} then by H\"older's inequality one can see that there exists $\al_0>0$ and a constant $\tilde{\ga}\in (0,1)$ such that $\|CT(\cdot)x\|_{L^1([0,\al_0],H)}\le \tilde{\ga}\|x\|$ for any $x\in D(A)$. Thus $C$ is a Miyadera-Voigt perturbation for $A$. Hence the operator $A^C:=A+C$ with domain $D(A^C):=D(A)$ generates a strongly continuous semigroup on $H,$ see \cite[p.199]{engel2001one}.

As shown in \cite{weiss1989admissible}, the condition {\bf (A)} implies that
\begin{align}\label{C-Weiss}
\begin{split}
&T(t)x\in D(C_\Lambda)\quad a.e.\; t>0,\cr &
\int_0^\alpha \|C_\Lambda T(t)x\|^2dt\leq \gamma(\alpha)^2 \|x\|^2,\quad \forall x\in H,\; \forall \al>0,
\end{split}
\end{align}
where $C_\Lambda$ is the Yosida extension of $C$ for $A$, see notation in the first section. The following result taken from \cite[prop.3.3]{hadd2005unbounded} shows a regularity property for the deterministic convolution.
\begin{proposition}\label{5*}
Assume that $A$ and $C\in\calL(D(A),H)$ satisfy the conditions {\rm\bf(S)} and {\rm\bf(A)}, respectively. Then
\begin{align*}&(\T*f)(t):=\int_0^t T(t-s)f(s)ds\in D(C_{\Lambda}),\quad a.e.\; t\geq0,\cr & \left\|C_{\Lambda}(\T*f)\right\|_{L^2([0,\alpha],H)}\leq c(\alpha)\|f\|_{L^2([0,\alpha],H)},\end{align*}
for all $f\in L^2_{loc}(\mathbb{R}^+,H)$ and $\alpha>0$ with $c(\alpha):=\alpha^\frac{1}{2}\gamma(\alpha)>0$ is independent of $f$. Moreover, $\lim_{\alpha\to0}c(\alpha)=0$.
\end{proposition}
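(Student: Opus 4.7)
\medskip
\noindent\textbf{Proof proposal.} The plan is to define the candidate ``$C_\Lambda(\T*f)$'' by hand via
\[
g(t):=\int_0^t C_\Lambda T(t-s)f(s)\,ds,
\]
establish the claimed $L^2$ estimate for $g$, and then identify $g(t)$ as $C_\Lambda(\T*f)(t)$ through a Yosida-type approximation. Measurability and Bochner integrability of $s\mapsto C_\Lambda T(t-s)f(s)$ follow from \eqref{C-Weiss}, which makes $x\mapsto C_\Lambda T(\cdot)x$ a bounded map $H\to L^2([0,\alpha],H)$ of norm $\leq\gamma(\alpha)$.

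The $L^2$ bound is a Cauchy--Schwarz plus Fubini computation I would carry out as follows. For almost every $t\in[0,\alpha]$,
\[
\|g(t)\|^2\le t\int_0^t \|C_\Lambda T(t-s)f(s)\|^2\,ds,
\]
so integrating over $[0,\alpha]$ and swapping the order of integration (the region is $\{0\le s\le t\le\alpha\}$) gives
\[
\int_0^\alpha \|g(t)\|^2\,dt\le \alpha\int_0^\alpha \int_0^{\alpha-s}\|C_\Lambda T(u)f(s)\|^2\,du\,ds \le \alpha\,\gamma(\alpha)^2\int_0^\alpha \|f(s)\|^2\,ds,
\]
using monotonicity of $\gamma$ and the basic admissibility inequality \eqref{C-Weiss}. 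This yields the norm bound with $c(\alpha)=\alpha^{1/2}\gamma(\alpha)$, and $c(\alpha)\to0$ as $\alpha\to0$ follows from the property $\lim_{\alpha\to0}\alpha^r\gamma(\alpha)=0$ recalled at the top of Section \ref{sec:2}.

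To identify $g(t)$ with $C_\Lambda(\T*f)(t)$ I would use the Yosida approximants $J_\lambda:=\lambda R(\lambda,A)\in\calL(H)$, which commute with $T(\cdot)$ and satisfy $J_\lambda x\in D(A)$ and $J_\lambda x\to x$ for every $x\in H$. Since $J_\lambda(\T*f)(t)\in D(A)$, one has
\[
C J_\lambda (\T*f)(t)=\int_0^t C\,T(t-s)J_\lambda f(s)\,ds=\int_0^t C_\Lambda T(t-s)J_\lambda f(s)\,ds,
\]
because $C$ and $C_\Lambda$ coincide on $D(A)$ and $J_\lambda$ commutes with $T(t-s)$. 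The already-established $L^2$ estimate applied to $J_\lambda f-f$, combined with the uniform bound $\|J_\lambda\|\le M\lambda/(\lambda-\beta)$ and strong convergence $J_\lambda\to I$ (hence $J_\lambda f\to f$ in $L^2([0,\alpha],H)$ by dominated convergence), gives $CJ_\lambda(\T*f)\to g$ in $L^2([0,\alpha],H)$.

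Passing to a subsequence $\lambda_n\to\infty$ along which the convergence is a.e., the definition \eqref{Yosida-extension} of the Yosida extension shows that for a.e.\ $t$, $(\T*f)(t)\in D(C_\Lambda)$ and $C_\Lambda(\T*f)(t)=g(t)$. The main subtlety I anticipate is ensuring that the a.e.\ pointwise limit of $C J_\lambda(\T*f)(t)$ along the \emph{full} parameter family (not just a subsequence) is needed to invoke \eqref{Yosida-extension}; this is handled by noting that once $(\T*f)(t)\in D(C_\Lambda)$ is established on a full-measure set via the subsequence, the definition of $C_\Lambda$ forces the full-parameter limit to exist and equal $g(t)$ on that set. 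The remaining verifications (measurability of $t\mapsto C_\Lambda T(t-s)f(s)$ on the triangle, applicability of scalar Fubini to $\|C_\Lambda T(u)f(s)\|^2$) are routine.
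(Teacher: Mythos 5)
The paper does not actually prove this proposition: it is quoted verbatim from \cite{hadd2005unbounded} (Prop.\ 3.3 there), so there is no internal proof to compare against line by line. That said, the first two thirds of your argument are correct and are the standard opening moves: the candidate $g(t):=\int_0^t C_\Lambda T(t-s)f(s)\,ds$ is well defined by \eqref{C-Weiss} and Fubini on the triangle, the Cauchy--Schwarz plus Fubini computation gives exactly $\|g\|_{L^2([0,\alpha],H)}\le \alpha^{1/2}\gamma(\alpha)\|f\|_{L^2([0,\alpha],H)}$, and the identity $C\lambda R(\lambda,A)(\T*f)(t)=\int_0^t C\,T(t-s)J_\lambda f(s)\,ds$ together with the same estimate applied to $J_\lambda f-f$ correctly yields $C\lambda R(\lambda,A)(\T*f)\to g$ in $L^2([0,\alpha],H)$.

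The gap is in the final identification, and it is the crux of the proposition. By \eqref{Yosida-extension}, $(\T*f)(t)\in D(C_\Lambda)$ \emph{means} that $\lim_{\lambda\to+\infty}C\lambda R(\lambda,A)(\T*f)(t)$ exists along the full parameter family. Extracting a subsequence $\lambda_n$ along which $C\lambda_n R(\lambda_n,A)(\T*f)(t)\to g(t)$ a.e.\ does not give this: $L^2$-convergence of a family yields a.e.\ convergence only of a subsequence, and subsequential pointwise convergence says nothing about the limit over all $\lambda\to\infty$ (a moving-bump family tends to $0$ in $L^2$ while converging pointwise nowhere along the full family). Your proposed repair --- ``once $(\T*f)(t)\in D(C_\Lambda)$ is established via the subsequence, the definition forces the full-parameter limit'' --- is circular, since the subsequence does not establish that membership in the first place; the membership \emph{is} the full-parameter limit. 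Nor can you patch it with a pointwise-in-$t$ bound on $\int_0^t C_\Lambda T(t-s)(J_\lambda f-f)(s)\,ds$: for fixed $t$ the map $h\mapsto\int_0^t C_\Lambda T(t-s)h(s)\,ds$ is not bounded from $L^2([0,t],H)$ to $H$, because admissibility controls only the $L^2$-in-time average of $\|C_\Lambda T(u)x\|$, not its size near $u=0$. The missing ingredient is the Lebesgue-point device of Weiss \cite{weiss1989admissible}, used in \cite{hadd2005unbounded}: one writes $C\lambda R(\lambda,A)(\T*f)(t)$ as an Abel (Laplace) mean $\lambda\int_0^\infty e^{-\lambda\sigma}\psi_t(\sigma)\,d\sigma$ of a locally square-integrable function built from the extended output map, and such means converge at every right Lebesgue point, hence for a.e.\ $t$. (For what it is worth, the paper's own proof of Theorem \ref{6*} makes exactly the same subsequence-to-full-family leap, so your argument is no less complete than that one; but the step is a genuine gap in both places, and closing it is precisely why this proposition is nontrivial.)
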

In the following we will prove a somehow an analogue regularity for the stochastic convolution. Before doing this, let gather some facts about fractional powers of operators.
\begin{remark}\label{zwart1}
Assume that $A$ generates an analytic semigroup $\T:=(T(t))_{t\ge 0}$ on a Hilbert space $H$ and let $H_\beta:=D((-A)^\beta)$ for some $\beta\in (0,\frac{1}{2})$. This space is endowed with the norm $\|x\|_\beta=\|(-A)^\beta x\|$. We recall that $\|(-A)^\beta T(t)\|\le \frac{M}{t^\beta}$ for $t>0,$ due to the analyticity of $\T$, see e.g. \cite{engel2001one}. Now clearly $K:=(-A)^\beta|_{D(A)}$ satisfies the condition {\bf(A)}, and its  Yosida extension for $A$ is exactly $(-A)^\beta,$ see \cite[p.187]{amansag2019maximal}. We have the following facts:
\begin{itemize}
  \item[{\rm (i)}]
 Assume that $H_\beta\subseteq \mathscr{Z}\subset H$ and $\mathscr{C}:\mathscr{Z}\to H$ is a linear operator and define $C:=\mathscr{C}|_{D(A)}$. Then $C$ satisfies the condition {\bf (A)}. In fact, let $\al>0$ and $x\in D(A)$. Then
\begin{align*}
\int^\al_0 \|CT(t)x\|^2dt&=\int^\al_0 \|\mathscr{C}T(t)x\|^2dt\cr & \le \|\mathscr{C}\|_{\calL(H_\beta,H)} \int^\al_0 \|(-A)^\beta T(t)x\|^2dt.
\end{align*}
Furthermore, we have $H_\beta\subset D(C_\Lambda)$ and  $C_\Lambda=\mathscr{C}$ on $H_\beta$.
  \item [{\rm (iii)}] Let $\theta\in (\frac{1}{2},1)$ another exponent. Take  $\mathscr{Z}:=H_\theta$, so that  $\mathscr{Z}\subset H_\beta\subset H$, due to $\theta>\beta$. Note that the operator $(-A)^\theta|_{D(A)}$ is not admissible for $A$, \cite{BDEH-21}. Thus a linear operator $\mathscr{C}:\mathscr{Z}\to H$ is not necessarily satisfies the condition {\bf(A)}.
\end{itemize}
 \end{remark}
In the rest of this section we are concerned with proving a regularity property of the following stochastic convolution for general semigroups
$$W_A^\Phi(t):=\int_0^t T(t-s)\Phi(s)dW(s).$$
In the case of an analytic semigroup  $\T$, we have the following classical result in stochastic analysis proved in Theorem 5.15 and Theorem 6.14 of  \cite{da2014stochastic}.
\begin{theorem}\label{classical-result}
	Let $\Phi(t)$, $t\geq0$ an $L_2^0(V,H)$-valued predictable process such that
	\begin{equation*}
		\mathbb{E}\int_0^t\|\Phi(s)\|^2_2ds<+\infty,\qquad t\geq 0.
	\end{equation*}
	\begin{itemize}
		\item[i)] If $A$ generates an analytic semigroup on $H$, then $W_A(t)\in D((-A)^\theta)$ for any $\theta\in(0,\frac{1}{2})$, $\mathbb{P}$-almost surely and
		$$\mathbb{E}\int_0^\alpha\left\|(-A)^\theta W_A^\Phi(t)\right\|^2dt\leq \gamma(\alpha)^2\mathbb{E}\int_0^\alpha\|\Phi(t)\|_2^2dt,$$
		for all $\alpha>0$ and some constant $\gamma(\alpha)>0$.
		\item[ii)] If $A$ generates a contractive and analytic semigroup on $H$, then $W_A^\Phi(t)\in D((-A)^\frac{1}{2})$, $\mathbb{P}$-almost surely and
		$$\mathbb{E}\int_0^\alpha\left\|(-A)^\frac{1}{2} W_A^\Phi(t)\right\|^2dt\leq \gamma(\alpha)^2\mathbb{E}\int_0^\alpha\|\Phi(t)\|_2^2dt,$$
		for all $\alpha>0$ and some constant $\gamma(\alpha)>0$.
	\end{itemize}
\end{theorem}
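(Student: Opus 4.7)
The plan is to establish both parts by pushing the fractional power $(-A)^\theta$ inside the stochastic integral and then invoking It\^o's isometry,
\begin{equation*}
\E\Bigl\|\int_0^t\Psi(s)\,dW(s)\Bigr\|^2 = \E\int_0^t \|\Psi(s)\|_2^2\,ds,
\end{equation*}
combined with the smoothing properties of $T(\cdot)$ coming from analyticity. Since $(-A)^\theta$ is closed, this interchange is justified as soon as $(-A)^\theta T(t-s)\Phi(s)$ is predictable and pathwise in $L_2^0(V,H)$, and once the quantitative $L^2$-estimate is in hand the pathwise membership $W_A^\Phi(t)\in D((-A)^\theta)$ for a.e.\ $t$, $\P$-a.s.\ follows automatically from closedness.

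For part (i) I would use the standard analytic-semigroup bound $\|(-A)^\theta T(r)\|_{\calL(H)} \le M r^{-\theta}$. Applying It\^o's isometry to $(-A)^\theta W_A^\Phi(t)$, integrating in $t\in[0,\al]$, and swapping the order of integration via Fubini yields
\begin{equation*}
\E\int_0^\al \|(-A)^\theta W_A^\Phi(t)\|^2\,dt \le M^2\, \E\int_0^\al \|\Phi(s)\|_2^2 \int_0^{\al-s} r^{-2\theta}\,dr\,ds \le \frac{M^2 \al^{1-2\theta}}{1-2\theta}\,\E\int_0^\al \|\Phi(s)\|_2^2\,ds,
\end{equation*}
and the restriction $\theta<\tfrac12$ is precisely what guarantees integrability of $r^{-2\theta}$ at the origin, so one reads off $\ga(\al)^2 = M^2\al^{1-2\theta}/(1-2\theta)$.

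The borderline case $\theta=\tfrac12$ in part (ii) is the main obstacle, because the bound $M r^{-1/2}$ is no longer square-integrable near zero and the crude estimate from part (i) breaks down. The substitute is the Hilbert-space quadratic estimate
\begin{equation*}
\int_0^\infty \|(-A)^{1/2}T(r)x\|^2\,dr \le \tfrac12 \|x\|^2,\qquad x\in H,
\end{equation*}
which, in the self-adjoint case, is obtained by integrating the energy identity $\tfrac{d}{dr}\|T(r)x\|^2 = -2\|(-A)^{1/2}T(r)x\|^2$ from $0$ to $\infty$ (equivalently, from the spectral theorem via $\int_0^\infty \la e^{-2\la r}\,dr = \tfrac12$), and in the general m-accretive case follows from the bounded $H^\infty$-functional calculus available on Hilbert space. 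Plugging this into $\|(-A)^{1/2}T(r)\Phi(s)\|_2^2 = \sum_k \la_k\|(-A)^{1/2}T(r)\Phi(s)e_k\|^2$, summing over $k$, and combining with It\^o's isometry and the Fubini swap as in part (i) yields the desired estimate with $\ga(\al)^2 = \tfrac12$, independent of $\al$. The subtle point I would need to cite a functional-calculus reference for is precisely the quadratic estimate above in the non-self-adjoint case.
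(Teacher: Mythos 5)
Your argument is correct, but it is not the route the paper takes. The paper does not prove Theorem \ref{classical-result} from scratch: it quotes it from Da Prato--Zabczyk and then, in the remark following Theorem \ref{6*}, re-derives it as a corollary of that theorem by checking that $C:=(-A)^\theta|_{D(A)}$ satisfies the admissibility condition {\bf(A)} (for $\theta<\tfrac12$ via $\|(-A)^\theta T(t)\|\le Mt^{-\theta}$, for $\theta=\tfrac12$ via Le Merdy's solution of the Weiss conjecture for analytic contraction semigroups) and that its Yosida extension is $(-A)^\theta$ itself. You instead give the classical direct proof: push the closed operator $(-A)^\theta$ through the stochastic integral, apply It\^o's isometry, and integrate in $t$. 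The quantitative inputs are the same in both approaches: your computation $\int_0^{\al-s}r^{-2\theta}\,dr<\infty$ is exactly what makes $(-A)^\theta$ admissible when $\theta<\tfrac12$, and your quadratic estimate $\int_0^\infty\|(-A)^{1/2}T(r)x\|^2\,dr\le C\|x\|^2$ is exactly the admissibility of $(-A)^{1/2}$, i.e.\ the Le Merdy input the paper also cites (note the constant $\tfrac12$ is specific to the self-adjoint case; in general one only gets some constant from the $H^\infty$-calculus, as you acknowledge). What your route buys is the avoidance of the Yosida-extension and Cauchy-sequence machinery of Theorem \ref{6*}, since $(-A)^\theta$ is closed and the standard closed-operator/stochastic-integral interchange lemma applies directly, and it yields explicit constants; what the paper's route buys is uniformity with its general framework, which must handle observation operators $C$ that are neither closed nor closable, for which your direct argument is unavailable. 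Two small points to tighten: the interchange step needs an explicit appeal to that closed-operator lemma, and its hypothesis $\E\int_0^t\|(-A)^\theta T(t-s)\Phi(s)\|_2^2\,ds<\infty$ is only guaranteed for almost every $t$ by your Fubini computation, so the membership $W_A^\Phi(t)\in D((-A)^\theta)$ should be asserted for a.e.\ $t$, $\P$-a.s., which is also how the paper states the corresponding conclusion in Theorem \ref{6*}.
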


The following result shows a regularity property for the stochastic convolution without assuming any regularity on the semigroup $(T (t))_{t \geq0} $ and for  general unbounded linear operators $ C $ which are not necessarily closed or closable.
\begin{theorem}\label{6*}
Let $A$ and $C\in\calL(D(A),H)$ satisfy {\rm\bf(S)} and  {\rm\bf(A)}, and let  $(\Phi(t))_{t\ge 0}$ be an $L_2^0(V,H)$-valued predictable process such that
\begin{equation*}
\mathbb{E}\int_0^t\|\Phi(s)\|^2_2ds<+\infty,\qquad t\geq 0.
\end{equation*}
For all $\alpha>0,$
 \begin{align}\label{Reg-Max}
 \begin{split}
&  W_A^\Phi(t)\in D(C_{\Lambda}),\quad a.e.\, t\ge 0,\;\mathbb{P}-a.s,\quad and\cr & \mathbb{E}\int_0^\alpha\left\|C_{\Lambda} W_A^\Phi(t)\right\|^2dt\leq \gamma(\alpha)^2\mathbb{E}\int_0^\alpha\|\Phi(t)\|_2^2dt.
\end{split}
\end{align}
 \end{theorem}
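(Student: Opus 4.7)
The plan is to approximate the unbounded operator $C$ by its bounded Yosida-type approximants $C_\lambda := C\lambda R(\lambda, A) \in \mathcal{L}(H)$ for $\lambda > \omega_0(A)$ large, and then pass to the limit $\lambda \to \infty$, exploiting that $C_\lambda x \to C_\Lambda x$ on $D(C_\Lambda)$ by the very definition \eqref{Yosida-extension} of the Yosida extension. Since each $C_\lambda$ is bounded, it commutes with the stochastic integral, giving
\[
C_\lambda W_A^\Phi(t) = \int_0^t C_\lambda T(t-s)\Phi(s)\,dW(s),
\]
so the problem reduces to showing that $\{C_\lambda W_A^\Phi(t)\}_\lambda$ converges and identifying the limit with $C_\Lambda W_A^\Phi(t)$.

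The algebraic core of the argument will be the identity
\[
C_\lambda T(r)x = CT(r)\lambda R(\lambda,A)x = C_\Lambda T(r)\lambda R(\lambda,A)x, \qquad x\in H,
\]
which holds because $\lambda R(\lambda,A)$ commutes with $T(r)$ and has range inside $D(A)\subset D(C_\Lambda)$. Combined with \eqref{C-Weiss} applied to $(\lambda R(\lambda,A) - \mu R(\mu,A))x$, this yields
\[
\int_0^\alpha \|(C_\lambda - C_\mu) T(r) x\|^2\, dr \leq \gamma(\alpha)^2 \|(\lambda R(\lambda,A) - \mu R(\mu,A))x\|^2
\]
for every $x \in H$ and $\alpha>0$. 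I would lift this to $\Psi \in L_2^0(V,H)$ by expanding in the orthonormal basis $(e_k)$ and invoking dominated convergence on the resulting series, using the uniform bound $\sup_\lambda \|\lambda R(\lambda,A)\|_{\mathcal{L}(H)}<\infty$ as dominating function, to obtain the same inequality with $\|\cdot\|$ replaced by the Hilbert--Schmidt norm $\|\cdot\|_2$ on both sides.

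From here, It\^o's isometry together with Fubini's theorem converts the estimate into
\[
\mathbb{E}\int_0^\alpha \|(C_\lambda - C_\mu) W_A^\Phi(t)\|^2\, dt \leq \gamma(\alpha)^2\, \mathbb{E}\int_0^\alpha \|(\lambda R(\lambda,A) - \mu R(\mu,A))\Phi(s)\|_2^2\, ds,
\]
whose right-hand side vanishes as $\lambda,\mu \to \infty$ by another application of dominated convergence in $(s,\omega)$. Thus $\{C_\lambda W_A^\Phi\}$ is Cauchy in $L^2(\Omega\times[0,\alpha];H)$ with some limit $Y$, and extracting an almost-everywhere convergent subsequence identifies $W_A^\Phi(t) \in D(C_\Lambda)$ together with $C_\Lambda W_A^\Phi(t) = Y(t)$ for a.e.\ $t\ge 0$, $\mathbb{P}$-a.s. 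The quantitative bound \eqref{Reg-Max} then falls out by passing $\lambda\to\infty$ in the analogous single-operator inequality
\[
\mathbb{E}\int_0^\alpha \|C_\lambda W_A^\Phi(t)\|^2\, dt \leq \gamma(\alpha)^2\, \mathbb{E}\int_0^\alpha \|\lambda R(\lambda,A)\Phi(s)\|_2^2\, ds
\]
and using $\|\lambda R(\lambda,A)\Phi(s)\|_2 \to \|\Phi(s)\|_2$.

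The principal obstacle I foresee is justifying convergence of $C_\lambda W_A^\Phi(t,\omega)$ along the full continuum $\lambda\to\infty$ (which the definition of $D(C_\Lambda)$ in \eqref{Yosida-extension} demands) rather than only along a subsequence extracted from the $L^2$-Cauchy argument. I plan to handle this through the Cauchy-in-$\lambda$ estimate itself: on the full-measure set where subsequential convergence already holds, the same Cauchy control over $\|C_\lambda W_A^\Phi(t) - C_\mu W_A^\Phi(t)\|$ uniformly as $\lambda,\mu\to\infty$ should upgrade the a.s.\ subsequential limit to a genuine continuous-parameter limit in the required Yosida-extension sense.
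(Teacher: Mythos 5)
Your proposal follows essentially the same route as the paper's proof: approximate $C$ by the bounded operators $C\lambda R(\lambda,A)$, combine the admissibility estimate with the It\^o isometry to obtain the $\lambda$-uniform bound and the Cauchy property of $C\lambda R(\lambda,A)W_A^\Phi$ in $L^2_{\mathcal F}([0,\alpha];H)$ as $\lambda R(\lambda,A)\Phi\to\Phi$, then extract an a.e.\ convergent subsequence and identify the limit with $C_\Lambda W_A^\Phi$. The subsequence-versus-full-continuum-limit subtlety you flag at the end is handled no more carefully in the paper (which passes directly from subsequential a.e.\ convergence to membership in $D(C_\Lambda)$), and your derivation of the final quantitative bound by letting $\lambda\to\infty$ is, if anything, more explicit than the paper's.
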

\begin{proof}
Let $\mathcal{N}^2(0,\alpha;H)$ be the space of all predictable process $\Phi:[0,T]\times\Omega\to L_2^0(V,H)$ such that
\begin{align*}
\|\Phi\|_\alpha:=\mathbb{E}\int_0^\alpha\|\Phi(s)\|^2_2ds<+\infty.
\end{align*}
Let $\Phi\in \mathcal{N}^2(0,\alpha;H)$ and $\beta>\omega_0(A)$. By the same arguments as in \cite{lahbiri2020}, for any $\lambda\in (\beta,\infty)$,
\begin{align}
\mathbb{E}\int_0^\alpha \|C\lambda R(\lambda,A)W_A^\Phi(t)\|^2dt\leq \gamma(\alpha)^2 \||\lambda R(\lambda,A)\Phi(s)|\|_\alpha.\label{**}
\end{align}
This implies that  for $\lambda,\mu\in (\beta,\infty),$
\begin{align}
\begin{split}
\mathbb{E}\int_0^\alpha \|C\lambda &R(\lambda,A)W_A^\Phi(t)-C\mu R(\mu,A)W_A^\Phi(t)\|^2dt\cr & \leq \gamma(\alpha)^2 \|\lambda R(\lambda,A)\Phi(s)-\mu R(\mu,A)\Phi(s)\|_\alpha.\label{***}
\end{split}
\end{align}
Moreover,
\begin{align*}\|\lambda R(\lambda,A)&\Phi(s)-\mu R(\mu,A)\Phi(s)\|^2_2\cr & =\sum_{k=1}^{+\infty}\lambda_k\|\lambda R(\lambda,A)\Phi(s)e_k-\mu R(\mu,A)\Phi(s)e_k\|^2.\end{align*}
Since $\tr Q=\sum_{k=1}^{+\infty}\lambda_k<+\infty$, then $(\lambda R(\lambda,A)\Phi(s))_{\lambda>\beta}$ is a Cauchy sequence in $L_2^0(V,H)$. On the other hand, we have $\Phi\in \mathcal{N}^2(0,\alpha;H)$, and hence $(\lambda R(\lambda,A)\Phi(s))_{\lambda>\beta}$ is a Cauchy sequence in  $\mathcal{N}^2(0,\alpha;H)$ as well. Now, the inequality \eqref{***} implies that $(C\lambda R(\lambda,A)W_A^\Phi(\cdot))_{\lambda>\beta}$ is a Cauchy sequence in $L^2_\mathcal{F}([0,\alpha];H)$. Thus there exists $(\lambda_k)_{k\in \mathbb{N}}\subset\rho(A)\cap \mathbb{R}$ such that the  sequence $\left(C\lambda_kR(\lambda_k,A)W_A^\Phi(t)\right)_k$ converges for a.e $t\geq0$, $\mathbb{P}$-almost surely. Consequently, $W_A^\Phi(t)\in D(C_{\Lambda})$ for a.e $t\geq0$, $\mathbb{P}$-almost surely.
\end{proof}
\begin{remark}We can use Theorem \ref{6*}, to give an alternative proof to Theorem \ref{classical-result}. In fact, if we assume that $A$ generates an analytic semigroup $\T$ on $H$ and $\theta\in (0,\frac{1}{2}),$ then from Remark \eqref{zwart1}, we know that $C:=(-A)^\theta$ with domain $D(C)=D(A)$ satisfies the condition {\bf (A)} and $C_\Lambda=(-A)^\theta$. Thus  $W_A^\Phi(t)\in D\left((-A)^\theta\right) $. If in addition $\T$ is a contraction semigroup, then $C:=(-A)^\frac{1}{2}$ with domain $D(C)=D(A)$ satisfies the condition {\bf(A)}, due to  Le Merdy \cite{LeMery}. Thus,  Theorem \ref{6*} shows also that $W_A^\Phi(t)\in D((-A)^\frac{1}{2})$.
\end{remark}
In the following we will give an application to Theorem \ref{6*}. Let $\mathscr{B}:\mathscr{Z}\to H$ and $\mathscr{M}:H\to L^0_2(V,H)$ as in the introductory section.
\begin{definition}\label{mild-flandoli}
Let $A$ satisfies the condition {\bf(S)} and denote  $B:=\mathscr{B}$ with domain $D(B):=D(A)$. A process $X\in \calC_\calF([0,T],H)$ is called a  mild solution of the stochastic equation \eqref{Flandoli-exa} if the operator $B$ admits an extension $\tilde{B}:D(\tilde{B})\subset H\to H$ such that $X(t)\in D(\tilde{B})$ for a.e. $t>0$, $\P$-a.s, $X(t)$ is $\calF_t$-adapted for any $t\in [0,T],$ the maps $t\mapsto X(t)$ and $t\mapsto \tilde{B}X(t)$ are measurable, $2$-integrable on $\Om\times [0,T]$, and satisfies the integral equation
\begin{align*}
X(t)=T(t)\xi+\int^t_0 T(t-s)\mathscr{M}(\tilde{B} X(s))dW(s)
\end{align*}
for any $\xi\in L^2(\mathcal{F}_0,H),$ and  $t\ge 0$, $\P$-a.s.
\end{definition}

The following result introduce conditions for the existence and uniqueness of the mild solution of the semilinear stochastic equation \eqref{Flandoli-exa}.

\begin{theorem}\label{B-theorem}Assume that $A$ satisfies {\bf(S)}, $\mathscr{M}$ satisfies the condition {\bf(L)}, and the unbounded linear operator $B$ satisfies the admissibility condition {\bf(A')}. Then there exists a unique $X\in \calC_\mathcal{F}([0,T],H)$ of the stochastic equation \eqref{Flandoli-exa} such that $X(t)\in D(B_\Lambda)$ for a.e. $t>0$ and $\P$-a.s., and
\begin{align*}
X(t)=T(t)\xi+\int^t_0 T(t-s)\mathscr{M}(B_\Lambda X(s))dW(s)
\end{align*}
for any $t\ge 0$ and $\xi\in L^2(\mathcal{F}_0,H),$ where $B_\Lambda$ is the Yosida extension of $B$ for $A$.
\end{theorem}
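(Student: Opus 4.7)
The plan is to apply Banach's contraction principle, but not directly to the unknown process $X$: because the nonlinearity $\mathscr{M}$ in \eqref{Flandoli-exa} is driven by the unbounded quantity $B_\Lambda X$, the natural fixed-point variable is the auxiliary process $Z(s):=B_\Lambda X(s)$ sought in $L^2_\calF([0,\alpha];H)$. Concretely, given predictable $Z\in L^2_\calF([0,\alpha];H)$, define
\[
X_Z(t):=T(t)\xi+\int_0^t T(t-s)\mathscr{M}(Z(s))\,dW(s),\qquad t\in[0,\alpha].
\]
By the Lipschitz property {\bf (L)}, $\mathscr{M}(Z(\cdot))$ is a predictable $L_2^0(V,H)$-valued process with finite second moment, so $X_Z$ is well-defined, $\calF_t$-adapted, and mean-square continuous by standard properties of stochastic convolutions. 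The consequence \eqref{C-Weiss} of admissibility gives $T(t)\xi\in D(B_\Lambda)$ a.e., and Theorem \ref{6*} applied with $\Phi=\mathscr{M}(Z(\cdot))$ yields that the stochastic convolution also lies in $D(B_\Lambda)$ for a.e.\ $t$, $\P$-a.s. Hence we may set $(\Gamma Z)(t):=B_\Lambda X_Z(t)$, obtaining a map $\Gamma:L^2_\calF([0,\alpha];H)\to L^2_\calF([0,\alpha];H)$.

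For $Z_1,Z_2\in L^2_\calF([0,\alpha];H)$, the difference $X_{Z_1}-X_{Z_2}$ is a pure stochastic convolution with integrand $\mathscr{M}(Z_1)-\mathscr{M}(Z_2)$, so a direct application of Theorem \ref{6*} together with {\bf (L)} gives
\[
\E\int_0^\alpha\|(\Gamma Z_1-\Gamma Z_2)(t)\|^2\,dt\;\le\;\tilde\ga(\alpha)^2 k^2\,\E\int_0^\alpha\|Z_1(s)-Z_2(s)\|^2\,ds.
\]
By condition {\bf (A')}, $\tilde\ga(\alpha)\to 0$ as $\alpha\to 0$, so we may fix $\alpha_0>0$ with $\tilde\ga(\alpha_0)k<1$. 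Banach's fixed-point theorem on the complete space $L^2_\calF([0,\alpha_0];H)$ produces a unique $Z^*$ with $\Gamma Z^*=Z^*$. Setting $X(t):=X_{Z^*}(t)$ gives $B_\Lambda X(t)=Z^*(t)$ a.e., so $X$ is the desired solution on $[0,\alpha_0]$, is mean-square continuous, and satisfies the integral equation in the theorem.

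To extend to $[0,T]$, note that the contraction constant $\tilde\ga(\alpha_0)k$ depends only on $\alpha_0$, not on the starting time nor on the initial random variable; moreover, the mean-square continuity of $X$ and the bound $\E\|X(\alpha_0)\|^2\le c(1+\E\|\xi\|^2+\E\int_0^{\alpha_0}\|Z^*\|^2ds)<\infty$ allow us to restart the construction on $[\alpha_0,2\alpha_0]$ with $\calF_{\alpha_0}$-measurable initial datum $X(\alpha_0)$, and after finitely many steps we cover $[0,T]$ and glue at the endpoints. Uniqueness follows by reversal: any two mild solutions $X_1,X_2$ yield predictable $Z_i:=B_\Lambda X_i\in L^2_\calF([0,\alpha_0];H)$ which are both fixed points of $\Gamma$, hence coincide on $[0,\alpha_0]$; the integral formula then forces $X_1=X_2$ there, and the iteration extends this to $[0,T]$. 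The main obstacle to overcome, and the conceptual reason the proof is not a direct Picard iteration on $X$, is that standard norms such as $\sup_t\E\|X(t)\|^2$ fail to contract because the nonlinearity produces $B_\Lambda$-unbounded output; recasting the problem on $Z=B_\Lambda X$ turns the regularity estimate \eqref{Reg-Max} of Theorem \ref{6*} into an honest $L^2_\calF$ contraction, with the zero-class hypothesis {\bf (A')} providing the small factor $\tilde\ga(\alpha_0)k<1$ that closes the argument.
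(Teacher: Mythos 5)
Your proposal is correct and follows essentially the same route as the paper: both set up the fixed-point iteration in the auxiliary variable $u=B_\Lambda X$ on $L^2_\calF([0,\alpha];H)$, use Theorem \ref{6*} together with \eqref{C-Weiss} to show the map lands in $D(B_\Lambda)$, and invoke the zero-class hypothesis {\bf(A')} to make $\Gamma$ a contraction for small $\alpha$ before extending to $[0,T]$. Your version is in fact slightly more careful in two places the paper glosses over — you correctly include the Lipschitz constant $k$ in the contraction factor $\tilde\ga(\alpha)k$, and you spell out the uniqueness and gluing arguments that the paper dismisses as ``standard.''
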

\begin{proof}
Let $u\in L^2_\calF([0,\al],H)$ for $\al>0$ and consider the process
\begin{align} \label{var}
X(t;u)=T(t)\xi+\int^t_0 T(t-s)\mathscr{M}(u(s))dW(s),\quad t\in [0,\al],\quad \xi\in L^2(\mathcal{F}_0,H).
\end{align}
According to \eqref{C-Weiss} and Theorem \ref{6*}, $X(t)\in D(B_\Lambda)$ for $t>0$, $\P$-a.s., and there exist constants $c>0$ and $\tilde{\ga}>0$ such that
\begin{align*}
\E\int^\al_0 \|B_\Lambda X(t)\|^2 dt\le c \tilde{\ga}(\al)\left(\E\|\xi\|^2+\E \|u\|_{L^2([0,\al],H)}\right)
\end{align*}
for any $\xi\in L^2(\mathcal{F}_0,H)$ and $u\in L^2_\calF([0,\al],H)$, where $\tilde{\ga}(\al)\to 0$ as $\al\to 0$. We now select the following map
\begin{align*}
\Gamma: L^2_\calF([0,\al],H)\to L^2_\calF([0,\al],H),\quad (\Gamma u)(t)=B_\Lambda X(t;u).
\end{align*}
For any $u_1$ and $u_2$ in $L^2_\calF([0,\al],H),$ we have
\begin{align*}
\|\Gamma u_1 -\Gamma u_2\|_{L^2_\calF([0,\al],H)}\le \tilde{\ga}(\al)\|u_1-u_2\|_{L^2_\calF([0,\al],H)}.
\end{align*}
We can choose $\al_0>0$ such that $0<\tilde{\ga}(\al_0)<1$, so that $\Gamma$ is a contraction on $L^2_\calF([0,\al],H)$. By the Banach's fixed pint theorem, there is a unique $u\in L^2_\calF([0,\al],H)$ such $u=\Gamma u=B_\Lambda X(\cdot,u)$. Now replacing $u$ in \eqref{var}, we have
\begin{align*}
X(t)=T(t)\xi+\int^t_0 T(t-s)\mathscr{M}(B_\Lambda X(s))dW(s),\quad t\in [0,\al_0],\; \xi\in L^2(\mathcal{F}_0,H).
\end{align*}
By standard arguments the
restriction on $\al_0$ can be removed.
\end{proof}
\begin{remark}
\begin{itemize}
  \item[{\rm(i)}] The linear case of the equation \eqref{Flandoli-exa} (i.e. $\mathscr{M}=I$) is considered in \cite[Section 6.5, page 176]{da2014stochastic}, where additional conditions on the semigroup $\T$ and the perturbation $B$ are assumed. In fact, in this reference the authors are mainly based on the concept of maximal regularity and interpolation spaces to introduce an appropriate  condition on $B$. These facts facilities the use of the classical Banach's fixed theorem to prove the well-posedness of the equation as well as the representation of the solution in terms of a variation of constants formula. Compared with our result, we have less conditions on $B$ and on the generator $A$, due to the regularity of the stochastic convolution obtained in Theorem \ref{6*}.
\item[{\rm(ii)}]In the case of $\mathscr{M}\equiv I$ , the equation \eqref{Flandoli-exa} was also treated by Bonaccorsi \cite{bouna} Flandoli \cite[Theorem 1.2]{Flandoli1} where the semigroup $(T(t))_{t\geq0}$ satisfies a regularity condition and the operator $B$ is (somehow) zero-class admissible for $A$. Moreover, in \cite{bouna}, the author used Malliavin calculus to represent the solution via an appropriate variation of constants formula. In Theorem \ref{B-theorem} we do not assume any regularity on the semigroup $(T(t))_{t\geq0}$.
  \item[{\rm(iii)}] In \cite{Hos-16}, the author considered the KPZ equation driven by space-time white
noise replaced with its fractional derivatives of order $\ga>0$ in spatial variable. This equation has the same regularity as the solution of the equation \eqref{Flandoli-exa} with $A=\partial_x^2$, $\mathscr{M}=I$ and $B=\partial_x^\sigma$. According to remark \ref{zwart1}, $\partial_x^\sigma$ is a zero-class admissible operator for $\partial_x^2$ for any $0<\sigma<1$.
\end{itemize}

\end{remark}

\begin{example}\label{sec2:ex1}
Let $\mathscr{O}\subset \mathbb{R}^n$ be an open bounded set with a $C^2$ boundary $\partial\mathscr{O}$ and outer unit normal $\nu$ and put $H=L^2(\mathscr{O})$ and $\mathscr{Z}=H^2(\mathscr{O})$. We consider the following nonlinear initial value problem
\begin{align}\label{exam2}
\begin{split}
		& \frac{\partial}{\partial t} X(t,x)=\Delta X(t,x)+f\left(\displaystyle\int_{\partial\mathscr{O}} \Upsilon(x,y)c_1(y)X(t,y)dy\right) \frac{\partial}{\partial t} W(t,x)\cr & \hspace{8cm} x\in \mathscr{O}, t\in[0,T],\\
		& X(0,x)=g(x), \hspace{5cm} x\in \mathscr{O}, \xi\in L^2(\mathscr{O}),\\  &  \nabla X(t,x)|\nu(x)=c_2(x)X(t,x), \hspace{3cm} x\in \partial \mathscr{O}, t\in[0,T],
\end{split}
\end{align}
where $\Upsilon\in L^\infty(\mathscr{O}\times \partial \mathscr{O})$, $c_1,c_2\in C_b(\partial\mathscr{O})$, $f$ is a real valued function, globally Lipschitz and $W(t)$ is $Q$-Wiener process on $L^2(\mathscr{O})$.
Define the following operators
\begin{align*}
& A=\Delta,\quad D(A)=\left\{\varphi\in H^2(\mathscr{O}): \nabla \varphi(x)|\nu(x)=c_2(x)\varphi(x),\quad x\in\partial\mathscr{O}\right\},\cr & (\mathscr{M}(\phi))(x)=f(\phi(x)),\qquad x\in\mathscr{O},\cr & (\mathscr{R}\varphi)(x)= \int_{\partial\mathscr{O}} \Upsilon(x,y)\varphi(y)dy,\qquad x\in\mathscr{O},\cr
	& (\Theta\varphi)(y)=c_1(y)\varphi(y),\quad y\in\partial\mathscr{O},\quad \varphi\in L^2(\partial\mathscr{O})\cr & \mathscr{B}= \mathscr{R} \Theta: H^2(\mathscr{O})\to L^2(\mathscr{O}), \cr & \mathscr{K}:H^2(\mathscr{O})\to L^2(\partial\mathscr{O}),\quad (\mathscr{K}\varphi)(x)=c_2(x)\varphi(x),\quad x\in\partial\mathscr{O}.
\end{align*}
It suffices to prove that $A$ is a generator of a strongly continuous semigroup $\T$ on $H,$ and $B:=\mathscr{B}$ with domain $D(B)=D(A)$ is a zero-class admissible operator for $A$.  To this end, we will use a perturbation result in \cite{HMR-2015}. In fact, the following operator
$$ A_0:=\Delta,\quad D(A_0)=\left\{\varphi\in H^2(\mathscr{O}): \nabla \varphi(x)|\nu(x)=0,\quad x\in\partial\mathscr{O}\right\}$$
generates an analytic semigroup $\T_0:=(\T_0(t))_{t\ge 0}$ on $H$. Now denote by $\varphi=\mathscr{N}\psi\in H^{\frac{3}{2}}(\mathscr{O})$ the solution of the elliptic boundary value problem $ \varphi=0$ on $\mathscr{O}$ and  $\nabla \varphi|\nu=\psi$ on $\partial\mathscr{O}$ for $\psi\in L^2(\partial\mathscr{O})$. Then $\mathscr{N}$ is continuous from $L^2(\partial\mathscr{O})$ to $D((-A_0)^\beta)$ for any $\beta\in (0,\frac{3}{4})$. We select the operators
\begin{align*}
\Xi:=-A_0 \mathscr{N}:L^2(\partial\mathscr{O})\to D(A_0^\ast)',\cr K:=\mathscr{K},\qquad D(K):=D(A_0).
\end{align*}
We will verify that the triple of operators $(A_0,\Xi,K)$ satisfies the condition of \cite[Theorem 4.1]{HMR-2015}. In fact, for
for any $t>0,$ $\theta\ge 0$ and $0<\beta<3/4$
	\begin{equation}\label{hadd}
\begin{split}
		 (-A_0)^\theta \T_0(t)\Xi\in \calL(L^2(\partial\mathscr{O}),H), \quad \|(-A_0)^\theta \T_0(t)\Xi\| \le \kappa_\beta t^{\beta-\theta-1},
\end{split}
	\end{equation}
	where $\kappa_\beta>0,$ is a constant, due to \cite{LT}. By choosing $\beta\in (\frac{1}{2},\frac{3}{4})$ and $\theta=0$ (hence $2(1-\beta)<1$), we obtain
	\begin{align*}
		\left\|\int^t_0 \T_0(t-s)\Xi v(s)ds\right\|_{L^2(\Om)}\le \delta_\beta \|\|v\|_{L^2([0,t],L^2(\mathscr{O}))}
	\end{align*}
	for all $t>0$ and $v\in L^2([0,t],L^2(\mathscr{O})),$ where $\delta_\beta>0$ is a constant, due to \eqref{hadd}. In the terminology of control theory, this means that $\Xi$ is an admissible control operator for $A_0$. On the other hand,  the operator $\mathscr{K}:D((-A_0)^\theta)\to L^2(\mathscr{O})$ is uniformly bounded for any $\theta>\frac{1}{4}$, so that $\mathscr{K}(-A_0)^{-\theta}\in \calL(L^2(\mathscr{O}))$ and $\|\mathscr{K}(-A_0)^{-\theta}\|\le \eta$ for a constant $\eta>0$.  Let us now choose $\theta\in (\frac{1}{4},\frac{1}{2})$. We have
	\begin{align}\label{E2}
		\|Ke^{tA_0}\|\le \eta \frac{M_0}{t^{\theta}},\quad (t>0).
	\end{align}
This shows that $K$ satisfies \eqref{adm} with respect to $\T_0$ and a constant $\ga_0(\alpha)\to 0$ as $\alpha\to 0$. On the other hand, by using Remark \ref{zwart1}, the Yosida extension of $K$ for $A_0$ is exactly $K_\Lambda =\mathscr{K}$. On the other hand, by using \eqref{hadd} and the fact that then $\frac{1}{2}<1-(\beta-\theta)<1$, we obtain
	\begin{align}\label{E3}
		\left(\int^\al_0\left\|\mathscr{K}\int^t_0 \T_0(t-s)\Xi v(s) ds\right\|^2 dt\right)^{\frac{1}{2}} \le \eta \kappa_\beta  \al^{\beta-\theta} \|v\|_{L^2([0,\al],L^2(\partial\mathscr{O}))}
	\end{align}
	for any $v\in L^2(\partial\mathscr{O})$. This implies  that the operator $A$ coincides with the following one
\begin{align*}
A_0^{cl}:=A_0+\B \mathscr{K},\quad D(A_0^{cl})=\{\varphi\in H^2(\mathscr{O}):(A_0+\Xi \mathscr{K})\varphi\in H\},
\end{align*}
which generates a strongly continuous semigroup $\T=(\T(t))_{t\ge 0}$ on $H$ such that
\begin{align}\label{SM}
\T(t)g=\T_0(t)g+\int^t_0 \T_0(t-s)\Xi \mathscr{K} \T(s)gds,\qquad (t\ge 0,\;g\in H),
\end{align}
see the proof of Theorem 4.1 of \cite{HMR-2015}. According to \cite{Weiss-regular}, if we define $K^A:=\mathscr{K}$ with domain $D(K^A)=D(A)$ then its Yosida extension for $A$ is $K^A_\Lambda=K_\Lambda=\mathscr{K}$, and \begin{align}\label{K-A-admissible} \int^\al_0 \|\mathscr{K}\T(t)g\|^2dt\le \tilde{\gamma}^2 \|g\|^2_H\end{align} for any $g\in D(A)$ and $\al>0$, and a constant $\tilde{\ga}:=\tilde{\ga}(\al)>0$. Let us now prove that $B:=\mathscr{B}$ with domain $D(B)=D(A)$ is a zero-class admissible operator for $A$. To this end, we select $B_0:=\mathscr{B}$ with domain $D(B_0)=D(A_0)$. As the operator $\mathscr{B}$ and $\mathscr{K}$ have the same form, then they have the same properties. So that $B_0$ satisfies the condition \eqref{E2} if we replace $K$ by $B_0$. Also $\mathscr{B}$ satisfies the inequality \eqref{E3} if we replace $\mathscr{K}$ by $\mathscr{B}$. Then for any $g\in D(A)$ and $t\ge 0,$
\begin{align*}
B\T(t)g= \mathscr{B}\T(t)g&=\mathscr{B}\left(\T_0(t)g+\int^t_0 \T_0(t-s)\Xi \mathscr{K} \T(s)gds\right)\cr &= B_0\T_0(t)g+\mathscr{B}\int^t_0 \T_0(t-s)\Xi \mathscr{K} \T(s)gds
\end{align*}
Remak that \eqref{E3} holds also if we replace $\mathscr{K}$ by $\mathscr{B}$. Now by using this fact and the estimate \eqref{K-A-admissible}, for any $\al>0$, there exists a constant $\kappa>0$ independent of $\al>0$ such that
 \begin{align*}
 \int^\al_0 \|B\mathbb{T}(t)g\|^2dt\le \kappa \left( \al^{1-2\theta}+ \al^{2(\beta-\theta)} \right) \|g\|^2_H \qquad (g\in D(A)).
 \end{align*}
 This $B$ is a zero-class admissible operator for $A$, so it satisfies the condition {\bf(A')}. Let $B_\Lambda$ be the Yosida extension of $B$ for $A$, and let us prove that $B_\Lambda=\mathscr{B}$. In fact, for a large $\la>0$ and by taking Laplace transform on the both sides of \eqref{SM}, we obtain
 \begin{align*}
 R(\la,A)=R(\la,A_0)+R(\la,A_0)\Xi \mathscr{K} R(\la,A)
 \end{align*}
 As for $K,$ the Yosida extension of $B_0$ for $A_0$ is $(B_0)_\Lambda=\mathscr{B}$. By using the fact $\calB R(\la,A_0)\Xi\to 0$ as $\la\to 0$ (see \cite{Weiss-regular}), for any $g\in H^2(\mathscr{O}),$
 \begin{align*}
 \|\la \mathscr{B}R(\la,A_0)\Xi \mathscr{K} R(\la,A)g\| \le \|\calB R(\la,A_0)\Xi\| \left(\| \mathscr{K} \la R(\la,A)g-\mathscr{K}g\|+\|\mathscr{K}g\|\right).
 \end{align*}
 This implies that for any $g\in H^2(\mathscr{O}),$
 \begin{align*}
 \lim_{\la\to+\infty}B \la R(\la,A)g=\lim_{\la\to+\infty}\mathscr{B}\la R(\la,A_0)g=\mathscr{B}g.
 \end{align*}
 Thus by Theorem \ref{B-theorem}, the solution of the equation \eqref{exam2}  satisfies $X(t)\in H^2(\mathscr{O})$ for a.e. $t>0$, $\mathbb{P}$-a.s. and
 \begin{align*}
 X(t)=\T(t)g+\int^t_0 \T(t-s)\mathscr{M}(\mathscr{B}X(s))dW(s)
 \end{align*}
 for any $t\ge 0,$ and $g\in H$.
\end{example}

\section{The well-posedness}\label{sec:well-posed}

In this section we study the well-posedness of the semilinear stochastic equation \eqref{0} (hence of \eqref{****}). Due to the form of the nonlinear term $G$ given in \eqref{G}, it is not clear how to define the mild solution $X(\cdot)$ of the stochastic equation \eqref{0}. This is because the operator $\mathscr{C}$ is only defined on a small domain of $H$. However, we will adopt a kind of solution to the equation \eqref{0} that will be justified in the main result of this section.
\begin{definition}\label{def3.1}
Let the condition {\bf(S)} be satisfied. A continuous process $X(\cdot):[0,T]\mapsto H$ is a mild solution of \eqref{0} if there exists an operator $\tilde{\mathscr{C}}:D(\tilde{\mathscr{C}})\subset H\to H$, extension of $\mathscr{C}$  such that
\begin{enumerate}
\item $X(t)$ is $\mathcal{F}_t$-adapted, for each $0\leq t\leq T$, $X(t)\in D(\tilde{\mathscr{C}})$ a.e $t\geq0,$ $\mathbb{P}$-almost surely,
\item $X(t)$ and $\tilde{\mathscr{C}} X(t)$ are measurable,  and
$$\int_0^T\|X(t)\|^2dt<\infty, \qquad \int_0^T\|\tilde{\mathscr{C}} X(t)\|^2dt<\infty,\quad \mathbb{P}\text{-a.s},$$
\item $X(\cdot)$ satisfies the stochastic integral equation
\begin{align*}
X(t)=T(t)\xi+\int_0^tT(t-s)F(\tilde{\mathscr{C}} X(s))ds
 +\int_0^tT(t-s)\mathscr{M}(X(s))dW(s),
\end{align*}
for all $t\in[0,T]$, $\mathbb{P}$-almost surely.
\end{enumerate}
\end{definition}
The following technical results are needed to prove the main results of this sections.
\begin{lemma}
Let assumptions {\bf (S)}, {\bf (A)} and {\bf (L)} be satisfied and let $\xi\in L^2(\mathcal{F}_0,H)$. For all $u\in \mathcal{C}_{\mathcal{F}}([0,T];H)$ there exists a sequence $(X_n)_{n\in\mathbb{N}}\subset \mathcal{C}_{\mathcal{F}}([0,T];H)$ such that for all $n\in\mathbb{N}$
$$X_n(t)\in D(C_\Lambda)\quad \text{a.e.} ~t\geq0, \mathbb{P}-\text{almost surely},~\text{and}$$
	\begin{align}\label{lemme3}
		\begin{split}
& X_0(t)=T(t)\xi+\int_0^t T(t-s)\mathscr{M}(u(s))dW(s), \cr
& X_{n+1}(t)=T(t)\xi+\int_0^t T(t-s)F(C_{\Lambda}X_n(s))ds\cr & \hspace{5cm}+\int_0^t T(t-s)\mathscr{M}(u(s))dW(s).
		\end{split}
	\end{align}
\end{lemma}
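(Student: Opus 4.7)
The plan is to construct the sequence by induction on $n$, maintaining at each step the two properties that (a) $X_n\in\mathcal{C}_\mathcal{F}([0,T],H)$ and (b) $X_n(t)\in D(C_\Lambda)$ a.e.\ $t\in[0,T]$, $\mathbb{P}$-a.s., together with the auxiliary quantitative estimate
\begin{equation*}
\mathbb{E}\int_0^T\|C_\Lambda X_n(t)\|^2\,dt<\infty,
\end{equation*}
which is what really powers the induction.

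\textbf{Base case.} I would first argue that the semigroup orbit $t\mapsto T(t)\xi$ is continuous in $L^2(\Omega,H)$ and, by \eqref{C-Weiss}, lies in $D(C_\Lambda)$ for a.e.\ $t>0$ with $\mathbb{E}\int_0^\alpha\|C_\Lambda T(t)\xi\|^2\,dt\le \gamma(\alpha)^2\mathbb{E}\|\xi\|^2$. Since $u\in\mathcal{C}_\mathcal{F}([0,T],H)$ and $\mathscr{M}$ is Lipschitz by \textbf{(L)}, the process $\Phi(s):=\mathscr{M}(u(s))$ is predictable and satisfies $\mathbb{E}\int_0^T\|\Phi(s)\|_2^2\,ds<\infty$. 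Standard stochastic calculus (the It\^o isometry plus the factorization method) shows the stochastic convolution is $\mathcal{F}_t$-adapted and $L^2$-continuous, while Theorem \ref{6*} gives both $W_A^\Phi(t)\in D(C_\Lambda)$ a.e.\ and the estimate \eqref{Reg-Max}. Adding these two contributions, and using that $D(C_\Lambda)$ is a linear subspace, establishes (a), (b) and the integrability bound for $X_0$.

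\textbf{Inductive step.} Suppose $X_n$ has been constructed with properties (a), (b) and the finiteness of $\mathbb{E}\int_0^T\|C_\Lambda X_n(t)\|^2\,dt$. Then the predictable process $s\mapsto F(C_\Lambda X_n(s))$ is well-defined for a.e.\ $s$, $\mathbb{P}$-a.s., and the Lipschitz bound in \textbf{(L)} yields
\begin{equation*}
\mathbb{E}\int_0^T\|F(C_\Lambda X_n(s))\|^2\,ds\le 2k^2\,\mathbb{E}\int_0^T\|C_\Lambda X_n(s)\|^2\,ds+2T\|F(0)\|^2<\infty.
\end{equation*}
Consequently the deterministic convolution $(\T*F(C_\Lambda X_n(\cdot)))(t)$ is well-defined in $H$ for each $t$ and is $L^2$-continuous in $t$ by the strong continuity of $\T$ and dominated convergence. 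Proposition \ref{5*} applied pathwise gives that $(\T*F(C_\Lambda X_n(\cdot)))(t)\in D(C_\Lambda)$ for a.e.\ $t$, together with
\begin{equation*}
\mathbb{E}\int_0^T\big\|C_\Lambda(\T*F(C_\Lambda X_n(\cdot)))(t)\big\|^2\,dt\le c(T)^2\,\mathbb{E}\int_0^T\|F(C_\Lambda X_n(s))\|^2\,ds<\infty.
\end{equation*}
Defining $X_{n+1}$ by \eqref{lemme3} and noting that the $T(t)\xi$ and stochastic convolution terms are unchanged from $X_0$ (only the deterministic drift evolves), I combine the three contributions to conclude (a), (b) and the integrability bound for $X_{n+1}$, closing the induction.

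\textbf{Main obstacle.} The delicate point is checking that adding the Yosida-regular deterministic convolution to the already-Yosida-regular $X_0$ preserves membership in $D(C_\Lambda)$ for a \emph{common} null set of times; this is handled by the fact that $D(C_\Lambda)$ is a vector subspace and that the relevant exceptional sets for each summand have zero product measure on $[0,T]\times\Omega$ by Theorem \ref{6*} and Proposition \ref{5*}. The remaining verifications, namely adaptedness of $X_n$ (inherited from $\xi$, $u$, $W$), $L^2$-continuity of the two convolutions, and the quantitative control of $\mathbb{E}\int_0^T\|C_\Lambda X_n(t)\|^2\,dt$, are all routine once the above framework is in place.
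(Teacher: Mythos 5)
Your proposal is correct and follows essentially the same route as the paper: an induction in which the base case combines \eqref{C-Weiss} for the orbit $T(\cdot)\xi$ with Theorem \ref{6*} for the stochastic convolution, and the inductive step feeds the bound $\mathbb{E}\int_0^T\|C_\Lambda X_n(t)\|^2\,dt<\infty$ (via the Lipschitz property of $F$) into Proposition \ref{5*} to handle the deterministic convolution. The extra details you supply on adaptedness, $L^2$-continuity, and the union of null sets are left implicit in the paper but are consistent with its argument.
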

\begin{proof}
	Let $\xi\in L^2(\mathcal{F}_0,H)$ and $u\in \mathcal{C}_{\mathcal{F}}([0,T];H)$. Using the condition ${\bf (A)}$ and \eqref{C-Weiss}, we have $X_0(t)\in D(C_\Lambda)$ for a.e $t\geq0$ and $\mathbb{P}$-a.s, and
	\begin{align}\label{C-estimate}
		\E\int^\al_0 \|C_\Lambda X_0(s)\|^2ds<\infty,
	\end{align}
	for any $\alpha>0$. This allows us to define the following process
	\begin{align*}X_1(t)=T(t)\xi+\int_0^t T(t-s)F(C_{\Lambda}X_0(s))ds+\int_0^t T(t-s)\mathscr{M}(u(s))dW(s),\end{align*}
	for all $u\in \calC_\mathcal{F}([0,T];H)$.  It follows from \eqref{C-estimate}, Proposition \ref{5*} and Proposition \ref{6*} that $X_1(t)\in D(C_{\Lambda})$ a.e $t\geq0$, $\mathbb{P}$-a.s., and
	\begin{align*}
		\E\int^\al_0 \|C_\Lambda X_1(s)\|^2ds<\infty,
	\end{align*}
	for any $\alpha>0$. Therefore, by induction, $X_n(t)\in D(C_{\Lambda})$ a.e $t\geq0$, $\mathbb{P}$-a.s., and
	\begin{align*}
		\E\int^\al_0 \|C_\Lambda X_n(s)\|^2ds<\infty,
	\end{align*}
	and the following sequence
	$$X_{n+1}(t)=T(t)\xi+\int_0^t T(t-s)F(C_{\Lambda}X_n(s))ds+\int_0^t T(t-s)\mathscr{M}(u(s))dW(s),$$
	is well defined for any $n\in \mathbb{N}$.
\end{proof}

\begin{lemma}\label{lemme3.3}
Let assumptions {\bf (S)}, {\bf (A)} and {\bf (L)} be satisfied. For any process  $u\in \mathcal{C}_{\mathcal{F}}([0,T];H)$ there exists a unique $X(\cdot)\in \mathcal{C}_{\mathcal{F}}([0,T];H)$ such that
\begin{gather}\label{3.4}
	X(t)\in D(C_{\Lambda}), \quad \text{a.e}~t\geq 0, ~\mathbb{P}\text{-almost surely},\notag\\
	X(t)=T(t)\xi+\int_0^tT(t-s)F(C_{\Lambda}X(s))ds+\int_0^tT(t-s)\mathscr{M}(u(s))dW(s).
\end{gather}

\end{lemma}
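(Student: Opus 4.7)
Since $F\circ C_\Lambda$ is not globally Lipschitz on $H$ (indeed not even defined on all of $H$), a direct Banach fixed point argument on $X$ in $\mathcal{C}_\mathcal{F}([0,T],H)$ is unavailable. My plan is to run the contraction on the auxiliary process $V:=C_\Lambda X$, which lives in $L^2_\mathcal{F}([0,\alpha],H)$, a space on which $F$ \emph{is} genuinely Lipschitz by {\bf(L)}. The stochastic convolution $Y(t):=\int_0^t T(t-s)\mathscr{M}(u(s))dW(s)$ is treated as a fixed inhomogeneity since $u$ is given, and $T(t)\xi$ is likewise a fixed term.

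\textbf{Step 1: Contraction on $V_n:=C_\Lambda X_n$.} Subtracting consecutive iterates in \eqref{lemme3} eliminates both $T(t)\xi$ and $Y(t)$. Applying $C_\Lambda$ to the surviving deterministic convolution and using Proposition \ref{5*} together with the Lipschitz bound in {\bf(L)} gives
\begin{equation*}
\mathbb{E}\|V_{n+1}-V_n\|_{L^2([0,\alpha],H)}^2\le c(\alpha)^2 k^2\,\mathbb{E}\|V_n-V_{n-1}\|_{L^2([0,\alpha],H)}^2.
\end{equation*}
Proposition \ref{5*} ensures $c(\alpha)\to 0$ as $\alpha\to 0$, so picking $\alpha_0>0$ with $c(\alpha_0)k<1$ shows that $(V_n)$ is Cauchy and hence convergent in $L^2_\mathcal{F}([0,\alpha_0],H)$ to some limit $V$. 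Moreover, the $X_n$ themselves converge in $\mathcal{C}_\mathcal{F}([0,\alpha_0],H)$ to some $X$ via the elementary bound $\|X_n(t)-X_{n-1}(t)\|\le Me^{\beta T}k\int_0^t\|V_{n-1}-V_{n-2}\|$ combined with the previous $L^2$ estimate.

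\textbf{Step 2: Reconstruction and verification of \eqref{3.4}.} I define
\begin{equation*}
X(t):=T(t)\xi+\int_0^t T(t-s)F(V(s))ds+Y(t),\qquad t\in[0,\alpha_0],
\end{equation*}
and check that each summand lies a.e.\ in $D(C_\Lambda)$: the linear term by \eqref{C-Weiss}, the deterministic convolution by Proposition \ref{5*} (valid since $F(V)\in L^2$), and the stochastic convolution by Theorem \ref{6*} (valid since $\mathscr{M}(u)\in L^2_\mathcal{F}$ via {\bf(L)}). Adding the three Yosida-extension identities and passing to the limit in the $C_\Lambda$-version of \eqref{lemme3} (each passage controlled by Proposition \ref{5*}, Theorem \ref{6*} and \eqref{C-Weiss}) yields $C_\Lambda X(t)=V(t)$ a.e., so $X$ satisfies \eqref{3.4}. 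Mean-square continuity $X\in\mathcal{C}_\mathcal{F}([0,\alpha_0],H)$ follows from the standard continuity of the deterministic and stochastic convolutions.

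\textbf{Step 3: Uniqueness and extension to $[0,T]$; main obstacle.} If $\tilde X$ is another solution on $[0,\alpha_0]$, then $\tilde V:=C_\Lambda\tilde X$ satisfies the same fixed-point identity as $V$, and the Step 1 estimate applied to $V-\tilde V$ forces $V=\tilde V$, hence $X=\tilde X$. Global existence on $[0,T]$ follows by restarting at $X(\alpha_0)\in L^2(\mathcal{F}_{\alpha_0},H)$ and patching finitely many intervals. The main subtle point is in Step 2: the contraction lives in the $V$-variable, so one must carefully verify that the \emph{reconstructed} $X$ does satisfy $C_\Lambda X=V$ (otherwise the $F(C_\Lambda X(s))$ in \eqref{3.4} need not match the $F(V(s))$ used to build $X$). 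This relies on additivity of $C_\Lambda$ applied to three independent regularity assertions, together with $L^2$-continuity of the map $g\mapsto C_\Lambda\int_0^\cdot T(\cdot-s)g(s)ds$ provided by Proposition \ref{5*}.
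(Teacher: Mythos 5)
Your proposal is correct and follows essentially the same route as the paper: a Picard iteration in which the contraction estimate is run on the auxiliary sequence $C_\Lambda X_n$ in $L^2_{\mathcal F}$ via Proposition \ref{5*}, Theorem \ref{6*} and \eqref{C-Weiss}, followed by the same limit-identification argument showing $C_\Lambda X=V$ and the same interval-patching to reach general $T$. The only (welcome) addition is that you spell out uniqueness explicitly via the contraction estimate, which the paper leaves implicit.
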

\begin{proof}
	Let $X_n$ be as in \eqref{lemme3}. Using  \eqref{C-Weiss} and Proposition \ref{5*}, we have
	\begin{align*}
		&\mathbb{E}\int_0^t\|C_{\Lambda}X_n(s)-C_{\Lambda}X_{n-1}(s)\|^2ds\cr  & \qquad\qquad\leq c(t)^2k^2\mathbb{E}\int_0^t\left\|C_{\Lambda}X_{n-1}(\sigma)-C_{\Lambda}X_{n-2}(\sigma)\right\|^2d\sigma.
	\end{align*}
	for all $t\in[0,T]$. By induction, we get
	\begin{align*}
		\mathbb{E}\int_0^t\|C_{\Lambda}X_n(s)-C_{\Lambda}X_{n-1}(s)\|^2ds\leq \left(c(t)k\right)^{2n}\gamma(t)^2\mathbb{E}\|\xi\|^2.
	\end{align*}
Since $\gamma(t)$ is nondecreasing in $t$, it follows that $\gamma(t)\leq \gamma(T)$ and $c(t)\leq c(T) $ for any $t\in [0,T]$. Thus, for any $t\in [0,T]$,
\begin{equation}\label{3.3}
	\mathbb{E}\int_0^T\|C_{\Lambda}X_n(s)-C_{\Lambda}X_{n-1}(s)\|^2ds\leq \left(c(T)k\right)^{2n}\gamma(T)^2\mathbb{E}\|\xi\|^2.
\end{equation}
Moreover, $\lim_{t\to 0}c(t)=0$, then we can choose $T$ small enough such that $c(T)k<1$ and consequently, $(C_{\Lambda}X)_n$ is a Cauchy sequence in $L^2_{\mathcal{F}}([0,T];H)$. The case of general $T>0$ can be treated by considering the equation in intervals $[0,T_0]$, $[T_0,2T_0]$,$\dots$ with $T_0$ such that $c(T_0)k<1$. On the other hands, simple calculations lead to
$$\sup_{t\in[0,T]}\mathbb{E}\|X_{n+1}-X_n(t)\|^2\leq TM^2e^{2|\beta|T}k^2\int_0^T\|C_{\Lambda}X_n(s)-C_{\Lambda}X_{n-1}(s)\|^2ds.$$
From \eqref{3.3}, it follows that $(X_n)_n$ is a Cauchy sequence in $\calC_\mathcal{F}([0,T];H)$.\\
Now, let
\begin{align*}
	X:=\lim_{n\to\infty}X_n~~ \text{in}~~ \calC_\mathcal{F}([0,T];H),\\
	Y:=\lim_{n\to\infty}C_{\Lambda}X_n~~ \text{in}~~ L^2_{\mathcal{F}}([0,T];H).
\end{align*}
It is clear that $\int_0^t T(t-s)F(C_{\Lambda}X_n(s))ds$ converges to $\int_0^t T(t-s)F(Y(s))ds$ as $n\to \infty$ for all $t\in [0,T]$. Thus,
$$X(t)=T(t)\xi+\int_0^t T(t-s)F(Y(s))ds+\int_0^t T(t-s)\mathscr{M}(u(s))ds,$$
and $X(t)\in D(C_{\Lambda})$ for $t\geq0$ a.e, $\mathbb{P}$ almost surely. Moreover, we have
$$\mathbb{E}\int_0^T\|C_{\Lambda}X_n(s)-C_{\Lambda}X(s)\|^2\leq c(T)^2k^2\mathbb{E}\int_0^T\|C_{\Lambda}X_n(s)-Y(s)\|^2ds,$$
for all $T\geq0$. It follows that $Y(t)=C_{\Lambda}X(t)$ and
\begin{equation*}
	X(t)=T(t)\xi+\int_0^t T(t-s)F(C_{\Lambda}X(s))ds+\int_0^t T(t-s)\mathscr{M}(u(s))dW(s)
\end{equation*}
for all $t\in [0,T]$.
\end{proof}
\begin{theorem}\label{thm1}
Let assumptions {\bf (S)}, {\bf (A)} and {\bf (L)} be satisfied. For any initial process $\xi\in L^2(\mathcal{F}_0,H),$ the semilinear stochastic equation \eqref{0} has a unique mild solution $X(\cdot)\in\calC_\mathcal{F}([0,T];H)$ such that
\begin{gather*}
	X(t)\in D(C_{\Lambda}), \quad \text{a.e}~t\geq 0, ~\mathbb{P}\text{-almost surely},\\
	X(t)=T(t)\xi+\int_0^tT(t-s)F(C_{\Lambda}X(s))ds+\int_0^tT(t-s)\mathscr{M}(X(s))dW(s).
\end{gather*}
\end{theorem}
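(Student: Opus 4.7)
The plan is a Banach fixed-point argument built on top of Lemma~\ref{lemme3.3}. For each $u\in\calC_\mathcal{F}([0,T];H)$, Lemma~\ref{lemme3.3} produces a unique $X=\Phi(u)\in\calC_\mathcal{F}([0,T];H)$ with $X(t)\in D(C_\Lambda)$ a.e.\ and satisfying the auxiliary integral equation in which the diffusion coefficient is $\mathscr{M}(u)$ rather than $\mathscr{M}(X)$. A mild solution of \eqref{0} in the sense of Definition~\ref{def3.1} is exactly a fixed point of $\Phi$, so the task reduces to showing that $\Phi$ admits a unique fixed point in $\calC_\mathcal{F}([0,T];H)$.

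To contract on a short interval $[0,T_0]$, I would take $u_1,u_2\in\calC_\mathcal{F}([0,T_0];H)$ with $X_i=\Phi(u_i)$, subtract the two integral equations, and apply Proposition~\ref{5*} to the deterministic convolution of $F(C_\Lambda X_1)-F(C_\Lambda X_2)$ and Theorem~\ref{6*} to the stochastic convolution of $\mathscr{M}(u_1)-\mathscr{M}(u_2)$, combined with the Lipschitz bound \textbf{(L)}, to obtain
\begin{align*}
\E\int_0^{T_0}\|C_\Lambda X_1(s)-C_\Lambda X_2(s)\|^2\,ds &\le 2c(T_0)^2k^2\,\E\int_0^{T_0}\|C_\Lambda X_1(s)-C_\Lambda X_2(s)\|^2\,ds\\
&\quad +2\gamma(T_0)^2k^2\,\E\int_0^{T_0}\|u_1(s)-u_2(s)\|^2\,ds.
\end{align*}
Since $c(T_0)\to 0$ as $T_0\to 0$, I choose $T_0$ small enough that $2c(T_0)^2k^2<1/2$ and absorb the first term on the left, producing a Lipschitz estimate for $C_\Lambda X_1-C_\Lambda X_2$ in $L^2([0,T_0];H)$ by $u_1-u_2$ in the same space, with constant tending to $0$ as $T_0\to 0$. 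Feeding this back into the variation-of-constants formula for $X_1-X_2$ and estimating the deterministic part by Cauchy--Schwarz and the stochastic part by the It\^o isometry then gives a contraction inequality of the form
\begin{align*}
\sup_{t\in[0,T_0]}\E\|X_1(t)-X_2(t)\|^2\le \eta(T_0)\sup_{t\in[0,T_0]}\E\|u_1(t)-u_2(t)\|^2,
\end{align*}
with $\eta(T_0)\to 0$ as $T_0\to 0$.

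Thus for $T_0$ small enough $\Phi$ is a strict contraction on $\calC_\mathcal{F}([0,T_0];H)$, and Banach's fixed point theorem yields a unique fixed point of $\Phi$, i.e.\ a unique mild solution on $[0,T_0]$. Since the admissible length $T_0$ depends only on $k$, $c(\cdot)$ and $\gamma(\cdot)$ and not on the initial datum, I would then iterate the construction on $[T_0,2T_0],\,[2T_0,3T_0],\ldots$, using $X(jT_0)\in L^2(\mathcal{F}_{jT_0},H)$ as the new initial condition, covering $[0,T]$ in finitely many steps; the unique continuation of the solution through each junction propagates to uniqueness on the full interval. The main obstacle is the self-referential nature of the first estimate: the Lipschitz action of $F$ on $C_\Lambda X_i$ creates a $\|C_\Lambda X_1-C_\Lambda X_2\|^2$ term on \emph{both} sides of the inequality, so closure of the contraction hinges on the subunital factor $c(T_0)=T_0^{1/2}\gamma(T_0)$ from Proposition~\ref{5*} going to zero as $T_0\to 0$---a property guaranteed here by the admissibility condition \textbf{(A)} and the regularity gained from Theorem~\ref{6*}, but generally unavailable for unbounded $\mathscr{C}$ outside this framework.
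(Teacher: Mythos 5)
Your proposal is correct and follows essentially the same route as the paper: both define the map $u\mapsto X(\cdot,u)$ via Lemma~\ref{lemme3.3}, derive the self-referential estimate on $\E\int_0^{T_0}\|C_\Lambda X_1-C_\Lambda X_2\|^2$ using Proposition~\ref{5*}, Theorem~\ref{6*} and \textbf{(L)}, absorb the $2c(T_0)^2k^2$ term for small $T_0$, feed the result back into the variation-of-constants formula to get a contraction in $\calC_\mathcal{F}([0,T_0];H)$, and then iterate over subintervals. No substantive differences.
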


\begin{proof}
Define the map $\mathscr{R}:\mathcal{C}_{\mathcal{F}}([0,T];H)\to \mathcal{C}_{\mathcal{F}}([0,T];H)$ by
\begin{align*}
	(\mathscr{R} u)(t):=X(t,u),
\end{align*}
where $X(t,u)$ is given by \eqref{3.4}. It suffices to prove that the map $\mathscr{R}$ has a unique fixed point in  $\mathcal{C}_{\mathcal{F}}([0,T];H)$. To this end, consider $u,v\in \mathcal{C}_{\mathcal{F}}([0,T];H)$. A similar calculations as in the proof of  Lemma \ref{lemme3.3} lead to
\begin{align*}
	&\mathbb{E}\int_0^t\|C_{\Lambda}X(s,u)-C_{\Lambda}X(s,v)\|^2ds \cr
	&\qquad\leq 2c(T)^2k^2\mathbb{E}\int_0^t\|C_{\Lambda}X(s,u)-C_{\Lambda}X(s,v)\|^2ds\\
	&\qquad\qquad +2\gamma(T)^2k^2\mathbb{E}\int_0^t\|u(s)-v(s)\|^2ds,
\end{align*}
for any $t\in[0,T]$. We have $\lim_{t\to0}c(t)=0$, then we can choose $T>0$ such that $2c(T)^2k^2<\frac{1}{2}$, this implies that
\begin{align*}
	&\mathbb{E}\int_0^t\| C_{\Lambda}X(s,u)-C_{\Lambda}X(s,v)\|^2ds \leq 4k^2T\gamma(T)^2\sup_{t\in[0,T]}\mathbb{E}\|u(t)-v(t)\|^2.
\end{align*}
Consequently,
\begin{align*}
	&\sup_{t\in[0,T]}\mathbb{E}\|\left(\mathscr{R}u\right)(t)-\left(\mathscr{R}v\right)(t)\|^2\cr
	&\qquad \leq 2M^2k^2e^{2|\beta|T}T\mathbb{E}\int_0^t\|C_{\Lambda}X(s,u)-C_{\Lambda}X(s,v)\|^2ds\\
	&\qquad\qquad +2M^2k^2e^{2|\beta|T}\mathbb{E}\int_0^t\|u(s)-v(s)\|^2ds\\
	&\qquad \leq \zeta_T\sup_{t\in[0,T]}\mathbb{E}\|u(t)-v(t)\|^2,
\end{align*}
where $\zeta_T:=8M^2k^4e^{2|\beta|T}\gamma(T)^2T^2+2M^2k^2e^{2|\beta|T}T$. We choose $T$ small enough such that $T=max\{T>0, \quad 2c(T)^2k^2<\frac{1}{2}~~\text{and}~~\zeta_T<1 \}$ and consequently, by the contraction principle,  there exists a unique $v\in \mathcal{C}_{\mathcal{F}}([0,T];H)$ such that
$$v(t)=T(t)\xi+\int_0^tT(t-s)F(C_{\Lambda}X(s,v))ds+\int_0^tT(t-s)\mathscr{M}(v(s))dW(s).$$
The case of general $T>0$ can be treated by considering the equation in intervals $[0,T_0]$, $[T_0,2T_0]$,$\dots$ with $T_0$ such that $T_0=max\{T_0>0, \quad 2c(T_0)^2k^2<\frac{1}{2}~~\text{and}~~\zeta_{T_0}<1 \}$. On the other hands, from Lemma \ref{lemme3}, for all $u\in\mathcal{C}_{\mathcal{F}}([0,T];H)$ there exists a unique $X(\cdot)\in \mathcal{C}_{\mathcal{F}}([0,T];H)$ such that \eqref{3.4} is satisfied. Thus, by the uniqueness, there exists a unique $X(\cdot)\in \mathcal{C}_{\mathcal{F}}([0,T];H)$ such that
\begin{align*}
X(t)=T(t)\xi+\int_0^tT(t-s)F(C_{\Lambda}X(s))ds+\int_0^tT(t-s)\mathscr{M}(X(s))dW(s).
\end{align*}
\end{proof}

We end this section by giving some examples that satisfy our abstract results. In the following we give two examples, the first example is devoted to kind of Kardar-Parizi-Zhang equation. The second is devoted to the Schr\"odinger equation with non-local integral term.
\begin{example}
Let $\mathscr{O}\subset \mathbb{R}^n$ be an open bounded set and put $H=L^2(\mathscr{O})$. We consider the following nonlinear initial value problem
\begin{eqnarray}\label{exx}
	\begin{cases}
		dX(t,x)=\left[\Delta X(t,x)+f\left((a.\nabla+c)X(t,x)\right)\right]dt\cr \hspace{3cm}+b(X(t,x))dW(t,x), & x\in \mathscr{O}, t\in[0,T],\\
		X(t,x)=0, & x\in \partial \mathscr{O}, t\in[0,T],\\
		X(0,x)=\xi(x), & x\in \mathscr{O}, \xi\in L^2(\mathscr{O}),
	\end{cases}
\end{eqnarray}
where $a\in L^\infty(\mathscr{O};\mathbb{C}^n)$, $c\in L^\infty(\mathscr{O})$ and $W(t)$ is $Q$-Wiener process on $V=H=L^2(\mathscr{O})$. We assume that $f$, $b$ are real valued functions and globally Lipschitz.  Define $A:D(A)\to H$ to be
\begin{align*}A\phi=\Delta\phi,\qquad D(A)=H^2(\mathscr{O})\cap H^1_0(\mathscr{O}).\end{align*} So  that $-A$ is a strictly
positive densely defined operator on $H$. It is known that $A$ generates a strongly continuous and diagonalizable  semigroup $(T(t))_{t\geq 0}$ on $H$. We select $H_{\frac{1}{2}}=D((-A)^{\frac{1}{2}})=H^1_0(\mathscr{O})$. We now define the following operators
\begin{align*}
	& \mathscr{C}z=a\cdot\nabla z+cz,\quad z\in H_{\frac{1}{2}},\cr
	& (F\phi)(x)=f(\phi(x)), \quad x\in \mathscr{O},\cr
	& (\mathscr{M}(\phi)u)(x)=b(\phi(x))u(x), \quad x\in \mathscr{O},\quad u\in H.
\end{align*}
Thus, the system \eqref{exx} becomes
\begin{eqnarray*}
	\begin{cases}
		dX(t)=\left[AX(t)+F\left(\mathscr{C}X(t)\right)\right]dt+\mathscr{M}(X(t))dW(t), &t\in[0,T],\\
		X(0)=\xi.
	\end{cases}
\end{eqnarray*}
Moreover, it is shown in \cite[Example 5.1.4]{tucsnak2009observation} that $C:=\mathscr{C}$  with domain $D(C)=D(A)$ satisfies the condition \textbf{(A)}. Also as in Remark \ref{zwart1} we have $C_\Lambda=\mathscr{C}$.
So, for each $\xi\in L^2(\mathcal{F}_0,H)$, \eqref{exx} has a unique mild solution  $X(\cdot)\in \calC_\mathcal{F}([0,T];H)$ such that
\begin{gather*}
	X(t)\in H_{\frac{1}{2}}, \quad\text{a.e}~ t\geq0, \mathbb{P}\text{-almost surely},\\
	X(t)=T(t)\xi+\int_0^t T(t-s)F(\mathscr{C}X(s))ds+\int_0^tT(t-s)\mathscr{M}(X(s))dW(s).
\end{gather*}
\end{example}

\begin{example}
Let $\Omega$ be an open bounded domain in $\mathbb{R}^n$, $n\geq2$ with sufficiently smooth boundary $\partial \Omega=\overline{\Gamma_0\cup\Gamma_1}$, $\Gamma_0\cap\Gamma_1=\emptyset$. We consider the following semilinear stochastic Schrodinger equation with non-local integral term
\begin{align}\label{exxx}
	\begin{cases}
		dX(t,x)=\left[i\Delta X(t,x)+f\left(i\displaystyle\int_{\Gamma_1}\Upsilon(x,y)X(t,y)dy\right)\right]dt\cr \hspace{3cm}+b(X(t,x))dW(t,x), & x\in \Omega, t\in[0,T],\\
		X(t,x)=0, & x\in \partial \Gamma_0, t\in[0,T],\\
		\frac{\partial X}{\partial \nu}(t,x)=0, & x\in \partial \Gamma_1, t\in[0,T],\\
		X(0,x)=\xi(x), & x\in \Omega, \xi\in L^2(\Omega),
	\end{cases}
\end{align}
where $\nu$ is the unit normal vector,   $\Upsilon: \overline\Omega\times\partial\Omega \to \mathbb{R}$ is a continuous function and $W(t)$ is $Q$-Wiener process on $L^2(\Omega)$. As usual, we write the equation  in an abstract form. We introduce the space $H=L^2(\Omega)$. We define the operators
\begin{align*}
	&A=i\Delta, \quad D(A)=\left\{g\in H^2(\Omega), \qquad g_{|\Gamma_0}=\left.\frac{\partial g}{\partial\nu}\right|_{\Gamma_1}=0 \right\},\cr
	&\mathscr{C}g=i\int_{\Gamma_1}\Upsilon(\cdot,y)g(y)dy, \quad g\in H^2(\Omega),\cr
	& (F\phi)(x)=f(\phi(x)), \quad x\in \Omega,\cr
	& (\mathscr{M}(\phi)u)(x)=b(\phi(x))u(x), \quad x\in \Omega,\quad u\in H,
\end{align*}
and rewrite \eqref{exxx} as
\begin{eqnarray*}
	\begin{cases}
		dX(t)=\left[AX(t)+F\left(\mathscr{C}X(t)\right)\right]dt+\mathscr{M}(X(t))dW(t), &t\in[0,T],\\
		X(0)=\xi.
	\end{cases}
\end{eqnarray*}
It is well known that the operator $A$ generates an unitary group $(T(t))_{t\geq0}$ on $H$. Moreover, it is shown in \cite[Example 4.2]{Lasri2021} that the operator $C:=\mathscr{C}$  with domain $D(C)=D(A)$ satisfies the condition \textbf{(A)}. Therefore, by Theorem \ref{thm1}, the equation \eqref{exxx} has unique mild solution $X(\cdot)\in \mathcal{C}_\mathcal{F}([0,T];H)$ such that
\begin{gather*}
	X(t)\in D(C_\Lambda), \quad\text{a.e}~ t\geq0, \mathbb{P}\text{-almost surely},\\
	X(t)=T(t)\xi+\int_0^t T(t-s)F(C_\Lambda X(s))ds+\int_0^tT(t-s)\mathscr{M}(X(s))dW(s).
\end{gather*}
\end{example}

\section{The Feller property of the transition semigroup}\label{sec:feller}

We are interested here in the continuous dependence of the mild solution of (\ref{0}) on the initial data and the boundedness of the moments. Specifically, we have the following.

\begin{theorem}\label{prop2}
Under the assumptions of Theorem \ref{thm1}, there exists $C_T > 0$ such that, for arbitrary $\xi$, $\eta\in L^2(\mathcal{F}_0,H)$ and $t\in[0,T]$, the following estimate holds
$$\mathbb{E}\|X(t,\xi)-X(t,\nu)\|^2\leq C_T\mathbb{E}\|\xi-\nu\|^2.$$
\end{theorem}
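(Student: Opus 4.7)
The plan is to set $Y(t):=X(t,\xi)-X(t,\nu)$ and to obtain the desired bound by first controlling $\mathbb{E}\int_0^t\|C_\Lambda Y(s)\|^2 ds$ via the admissibility estimates of Proposition \ref{5*} and Theorem \ref{6*}, then feeding this bound back into the integral equation for $Y$ and closing with Gronwall's lemma. By linearity of the semigroup and the Itô integral,
\begin{align*}
Y(t)=T(t)(\xi-\nu)&+\int_0^t T(t-s)\bigl[F(C_\Lambda X(s,\xi))-F(C_\Lambda X(s,\nu))\bigr]ds\\
&+\int_0^t T(t-s)\bigl[\mathscr{M}(X(s,\xi))-\mathscr{M}(X(s,\nu))\bigr]dW(s).
\end{align*}
By Theorem \ref{thm1}, both $X(s,\xi)$ and $X(s,\nu)$ lie in $D(C_\Lambda)$ for a.e.\ $s$, $\mathbb{P}$-a.s., and so does $Y(s)$.

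First I would apply $C_\Lambda$ termwise: to the first term I would use \eqref{C-Weiss} to get the bound $\gamma(t)^2\mathbb{E}\|\xi-\nu\|^2$; to the deterministic convolution I would apply Proposition \ref{5*} together with the Lipschitz condition \textbf{(L)} on $F$ to obtain $c(t)^2 k^2\,\mathbb{E}\int_0^t\|C_\Lambda Y(s)\|^2 ds$; and to the stochastic convolution I would apply Theorem \ref{6*} with the Lipschitz condition on $\mathscr{M}$ to obtain $\gamma(t)^2 k^2\,\mathbb{E}\int_0^t\|Y(s)\|^2 ds$. Using $(a+b+c)^2\le 3(a^2+b^2+c^2)$ and the fact that $\lim_{t\to 0} c(t)=0$, I would choose $T_0>0$ small enough so that $3c(T_0)^2 k^2<\tfrac{1}{2}$, and then absorb the $C_\Lambda Y$ term on the right, which yields
\begin{align*}
\mathbb{E}\int_0^t\|C_\Lambda Y(s)\|^2 ds\le K_1(T_0)\,\mathbb{E}\|\xi-\nu\|^2+K_2(T_0)\,\mathbb{E}\int_0^t\|Y(s)\|^2 ds
\end{align*}
for all $t\in[0,T_0]$, with explicit constants $K_1,K_2>0$ depending only on $T_0$, $k$, $M$ and $\gamma(T_0)$.

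Next, taking the squared norm in the integral identity for $Y(t)$, applying Cauchy--Schwarz to the Bochner integral and the Itô isometry to the stochastic integral, together with \textbf{(S)} and \textbf{(L)}, I would obtain an estimate of the form
\begin{align*}
\mathbb{E}\|Y(t)\|^2\le C_1\,\mathbb{E}\|\xi-\nu\|^2 +C_2\,\mathbb{E}\int_0^t\|C_\Lambda Y(s)\|^2 ds +C_3\,\mathbb{E}\int_0^t\|Y(s)\|^2 ds,
\end{align*}
where $C_i=C_i(T_0,M,\beta,k)$. Substituting the previous bound for $\mathbb{E}\int_0^t\|C_\Lambda Y(s)\|^2 ds$ yields
\begin{align*}
\mathbb{E}\|Y(t)\|^2\le \widetilde C_1\,\mathbb{E}\|\xi-\nu\|^2+\widetilde C_2\int_0^t \mathbb{E}\|Y(s)\|^2 ds,
\end{align*}
and Gronwall's inequality on $[0,T_0]$ gives the desired estimate with constant $\widetilde C_1 e^{\widetilde C_2 T_0}$. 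A standard concatenation argument on the intervals $[0,T_0],[T_0,2T_0],\ldots$ extends the bound to arbitrary $T>0$, producing the claimed constant $C_T$.

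The main obstacle is the circular dependence between $\mathbb{E}\|Y\|^2$ and $\mathbb{E}\int_0^t\|C_\Lambda Y\|^2 ds$: the nonlinearity $F$ enters through $C_\Lambda$, so one cannot estimate $\mathbb{E}\|Y\|^2$ directly by Gronwall without first controlling $C_\Lambda Y$. Overcoming this requires exploiting the smallness $c(t)\to 0$ as $t\to 0$ to absorb the $C_\Lambda Y$ term on a small subinterval $[0,T_0]$, just as was done in the proofs of Lemma \ref{lemme3.3} and Theorem \ref{thm1}, and then iterating.
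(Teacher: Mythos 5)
Your proposal is correct and follows essentially the same route as the paper's proof: the three-term splitting of $C_\Lambda\bigl(X(\cdot,\xi)-X(\cdot,\nu)\bigr)$, the admissibility estimates from Proposition \ref{5*} and Theorem \ref{6*}, absorption of the $C_\Lambda$-term via $3k^2c(T_0)^2<\tfrac12$, then Gronwall and concatenation over subintervals. The only (immaterial) difference is organizational: the paper first iterates the $C_\Lambda$-estimate over $[0,T_0],[T_0,2T_0],\dots$ to get it on all of $[0,T]$ and applies Gronwall once globally, whereas you apply Gronwall on each subinterval and concatenate at the end.
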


\begin{proof}
Let $\xi$, $\eta\in L^2(\mathcal{F}_0,H)$ and set $M=\sup_{t\in[0,T]}\|T(t)\|$. Let $T_0>0$ such that $3k^2c(T_0)^2<\frac{1}{2}$. By the uniqueness of the solution we have that
$$X(t,\xi)=\mathbbm{1}_{[0,T_0]}X_1(t,\xi)+\mathbbm{1}_{[T_0,2T_0]} X_2(t,\xi), \quad\forall t\in[0,2T_0], $$
where,
\begin{align*}X_1(t,\xi)=T(t)\xi+\int_{0}^tT(t-s)&F(C_{\Lambda}X_1(\xi,s))ds\cr &+\int_0^tT(t-s)\mathscr{M}(X_1(s,\xi))dW(s)\end{align*}
and
\begin{align*}X_2(t,\xi)=T(t-T_0)X_1(T_0,\xi)&+\int_{T_0}^tT(t-s)F(C_\Lambda X_2(s,\xi))ds\cr & +\int_{T_0}^tT(t-s)\mathscr{M}\left(X_2(s,\xi)\right)dW(s).
\end{align*}
For all $t\in[0,T_0]$ we have
\begin{align*}
	& \|C_{\Lambda}X_1(s,\xi)-C_{\Lambda}X_1(s,\eta)\|^2\cr  &\qquad\leq  3\|C_{\Lambda}T(s)(\xi-\eta)\|^2\\
	& \qquad\qquad+3\left\|C_{\Lambda}\int_0^sT(s-\sigma)\left[F(C_{\Lambda}X_1(\sigma,\xi))-F(C_{\Lambda}X_1(\sigma,\eta))\right]d\sigma\right\|^2\\
	& \qquad\qquad+3 \left\|C_{\Lambda}\int_0^sT(s-\sigma)\left[\mathscr{M}(X_1(\sigma,\xi))-\mathscr{M}(X_1(\sigma,\eta))\right]dW(\sigma)\right\|^2.
\end{align*}
Using Proposition \ref{5*} and Proposition \ref{6*} we get
\begin{align*}
	& \int_0^t\mathbb{E}\|C_{\Lambda}X_1(s,\xi)-C_{\Lambda}X_1(s,\eta)\|^2ds\cr
	& \qquad \leq 3\gamma(T_0)^2\mathbb{E}\|\xi-\eta\|^2\\
	& \qquad\qquad +3k^2c(T_0)^2\int_0^t\mathbb{E}\left\|C_{\Lambda}X_1(s,\xi)-C_{\Lambda}X_1(s,\eta)\right\|^2ds\\
	& \qquad \qquad +3k^2\gamma(T_0)^2\int_0^t\mathbb{E}\left\|X_1(s,\xi)-X_1(s,\eta)\right\|^2ds.
\end{align*}
Then
\begin{align}\label{2}
	\begin{split}
		& \int_0^t\mathbb{E}\|C_{\Lambda}X_1(s,\xi)-C_{\Lambda}X_1(s,\eta)\|^2ds \cr &\qquad\leq 6\gamma(T_0)^2\mathbb{E}\|\xi-\eta\|^2+
		6k^2\gamma(T_0)^2\int_0^t\mathbb{E}\left\|X_1(s,\xi)-X_1(s,\eta)\right\|^2ds,
	\end{split}
\end{align}
for all $t\in[0,T_0]$. Repeating the same argument as above and using a change of variables, we obtain
\begin{align*}
	&\int_{T_0}^t\mathbb{E}\|C_{\Lambda}X_2(s,\xi)-C_{\Lambda}X_2(s,\eta)\|^2ds\cr  & \qquad\qquad\leq 6\gamma(T_0)^2\mathbb{E}\|X(T_0,\xi)-X(T_0,\eta)\|^2\cr & \qquad\qquad\qquad+6k^2\gamma(T_0)^2\int_{T_0}^t\mathbb{E}\left\|X_2(s,\xi)-X_2(s,\eta)\right\|^2ds,
\end{align*}
for all $t\in [T_0,2T_0]$. Moreover, simple calculations lead to
\begin{align*}
	& \mathbb{E}\|X(T_0,\xi)-X(T_0;\eta)\|^2\cr &\qquad\leq 3M^2\mathbb{E}\|\xi-\mu\|^2\\
	&\qquad\qquad+3T_0M^2k^2\mathbb{E}\int_0^{T_0}\|C_{\Lambda}X_1(s,\xi)-C_{\Lambda}X_1(s,\eta)\|^2ds\\
	& \qquad\qquad+3M^2k^2\mathbb{E}\int_0^{T_0}\|X_1(s,\xi)-X_1(s,\eta)\|^2ds.
\end{align*}
Using \eqref{2}, we get
\begin{align*}
	& \mathbb{E}\|X(T_0,\xi)-X(T_0;\eta)\|^2\cr &\qquad \leq \left(3M^2+18T_0M^2k^2\gamma(T_0)^2\right)\mathbb{E}\|\xi-\mu\|^2\\
	& \qquad\qquad +\left(3M^2k^2+18M^2k^2T_0\gamma(T_0)^2\right)\mathbb{E}\int_0^{T_0}\|X_1(s,\xi)-X_1(s,\eta)\|^2ds.
\end{align*}
It follows that
\begin{align}\label{x2}
	\begin{split}
		& \int_{T_0}^t\mathbb{E}\|C_{\Lambda}X_2(s,\xi)-C_{\Lambda}X_2(s,\eta)\|^2ds\cr &\qquad\leq 6\gamma(T_0)^2\left(3M^2+18M^2k^2T_0\gamma(T_0)^2\right)\mathbb{E}\|\xi-\eta\|^2\\
		& \qquad+6\gamma(T_0)^2\left(3M^2k^2+18M^2k^2T_0\gamma(T_0)^2\right)\int_{0}^{T_0}\mathbb{E}\left\|X_1(s,\xi)-X_1(s,\eta)\right\|^2ds\\
		& \qquad+6k^2\gamma(T_0)^2\int_{T_0}^t\mathbb{E}\left\|X_2(s,\xi)-X_2(s,\eta)\right\|^2ds.
	\end{split}
\end{align}
Thus, from \eqref{2} and \eqref{x2}, we have
\begin{align}\label{x}
	\begin{split}
		&\int_0^t\mathbb{E}\|C_{\Lambda}X(s,\xi)-C_{\Lambda}X(s,\eta)\|^2ds\cr
		&\qquad\leq  C_{T_0,1}\mathbb{E}\|\xi-\eta\|^2+C_{T_0,2}\int_0^{t}\mathbb{E}\left\|X(s,\xi)-X(s,\eta)\right\|^2ds
	\end{split}
\end{align}
for all $t\in[0,2T_0]$, where $C_{T_0,1}$, $C_{T_0,2} $ are constants. This process can be repeated to obtain \eqref{x} for all $t\in[0,T]$, $T>0$. On the other hand, we have
\begin{eqnarray*}
	\mathbb{E}\|X(t,\xi)-X(t,\eta)\|^2 &\leq& 3M^2\mathbb{E}\|\xi-\eta\|^2\\
	& &+ 3M^2k^2T\int_0^t\mathbb{E}\|C_{\Lambda}X(s,\xi)-C_{\Lambda}X(s,\eta)\|^2ds\\
	& &+3M^2k^2\int_0^t\mathbb{E}\|X(s,\xi)-X(s,\eta)\|^2ds,
\end{eqnarray*}
for all $t\in[0,T]$. Now, by using the inequality \eqref{x} we obtain
\begin{eqnarray*}
	\mathbb{E}\|X(t,\xi)-X(t,\eta)\|^2 &\leq& C_{T,3}\mathbb{E}\|\xi-\eta\|^2\\
	& &+C_{T,4}\int_0^t\mathbb{E}\|X(s,\xi)-X(s,\eta)\|^2ds,
\end{eqnarray*}
where $C_{T,3}$, $C_{T,4} $ are constants. Using Gronwall’s lemma, the estimate of the theorem follows.
\end{proof}

With the use of Theorem \ref{thm1} and Theorem \ref{prop2}, we can now prove the Feller property of the associated semigroup associated to the stochastic evolution equation \eqref{0}. In fact, under the condition of Theorem \ref{thm1}, we define
\begin{align*}P_t\phi(x):=\mathbb{E}(\phi(X(t,x))),\quad t\in[0,T],\; x\in H,\end{align*} where $\phi(\cdot)$ is bounded and Borel measurable function. In addition, we select
\begin{align*}
\mathcal{C}_b(H):=\left\{\phi:H\to\mathbb{R}: \phi\quad \text{is bounded and continuous function}\right\}.
\end{align*}
We have the following result.
\begin{proposition}
Under the hypotheses \textbf{(S)}, \textbf{(A)} and \textbf{(L)} we have  \begin{align*}P_t\mathcal{C}_b(H)\subset \mathcal{C}_b(H)\end{align*} for all $t\in [0,T]$.
\end{proposition}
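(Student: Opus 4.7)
The plan is to verify directly the two properties making up $\mathcal{C}_b(H)$: boundedness and continuity of $P_t\phi$ for $\phi\in\mathcal{C}_b(H)$ and $t\in[0,T]$ fixed. Boundedness is immediate, since $|P_t\phi(x)|=|\mathbb{E}(\phi(X(t,x)))|\leq\|\phi\|_\infty$ for every $x\in H$, where $\|\phi\|_\infty:=\sup_{y\in H}|\phi(y)|<\infty$. So the whole content lies in continuity.

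For continuity, I would fix $x\in H$ and take any sequence $(x_n)\subset H$ with $x_n\to x$. Viewing the deterministic points as (constant) random variables in $L^2(\mathcal{F}_0,H)$, Theorem \ref{prop2} applies with $\xi:=x_n$ and $\eta:=x$, giving
\begin{align*}
\mathbb{E}\|X(t,x_n)-X(t,x)\|^2\leq C_T\|x_n-x\|^2\longrightarrow 0.
\end{align*}
Hence $X(t,x_n)\to X(t,x)$ in $L^2(\Omega;H)$, and in particular in probability. Extracting a subsequence $(x_{n_k})$, we get $X(t,x_{n_k})\to X(t,x)$ $\mathbb{P}$-a.s., and since $\phi$ is continuous on $H$, $\phi(X(t,x_{n_k}))\to \phi(X(t,x))$ $\mathbb{P}$-a.s. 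The bound $|\phi(X(t,x_{n_k}))|\leq\|\phi\|_\infty$ combined with the bounded convergence theorem yields
\begin{align*}
P_t\phi(x_{n_k})=\mathbb{E}(\phi(X(t,x_{n_k})))\longrightarrow \mathbb{E}(\phi(X(t,x)))=P_t\phi(x).
\end{align*}

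To promote this subsequential convergence to convergence of the full sequence, I would apply the usual "every subsequence has a further convergent subsequence to the same limit" argument: starting from any subsequence of $(x_n)$, the $L^2$-convergence still holds, so one extracts a further a.s.-convergent sub-subsequence and repeats the dominated-convergence step. Since the limit $P_t\phi(x)$ is the same along every such sub-subsequence, the whole sequence $P_t\phi(x_n)$ must converge to $P_t\phi(x)$. This shows $P_t\phi\in\mathcal{C}_b(H)$ and completes the proof.

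There is no real obstacle here — the argument is a routine consequence of Theorem \ref{prop2} together with the bounded continuity of $\phi$. The only point requiring minor care is the passage from $L^2$-convergence of $X(t,x_n)$ to convergence of $\mathbb{E}(\phi(X(t,x_n)))$, which is handled by the standard subsequence trick above (rather than, say, using uniform integrability, although the boundedness of $\phi$ makes that equally available).
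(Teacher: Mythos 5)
Your proof is correct: the paper actually gives no proof of this proposition at all, merely asserting (in the introduction and by its placement after Theorem \ref{prop2}) that the Feller property follows from the continuous dependence estimate $\mathbb{E}\|X(t,\xi)-X(t,\eta)\|^2\leq C_T\mathbb{E}\|\xi-\eta\|^2$. Your argument --- boundedness from $|\phi|\leq\|\phi\|_\infty$, and continuity via $L^2$-convergence, passage to an a.s.-convergent subsequence, bounded convergence, and the standard sub-subsequence trick --- is exactly the routine deduction the authors leave implicit, and it is carried out without gaps.
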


\section{Applications to semilinear stochastic equations of neutral type}\label{sec:neutral}
We consider the following semi-linear stochastic equation of neutral type
\begin{align}\label{app}
\begin{cases}
d(X(t)-\mathscr{D}X_t)=[A(X(t)-\mathscr{D}X_t)+F(LX_t)]dt\cr\hspace{5.25cm}+B(X_t)dW(t), &t\in[0,T]\\
\left(X(t)-\mathscr{D}X_t\right)_{|_{t=0}}=\xi, \qquad X_0=\phi,
\end{cases}
\end{align}
where $A:D(A) \subset H\to H$ is the generator of a $C_0$-semigroup $(T(t))_{t\geq0}$ on a Hilbert space $H$, the process $X:[-r,\infty)\to X$ and its history $X_t:=X(t+\cdot):[-r,0]\to H$ is defined for any $t\ge 0$ by
\begin{align*}
X_t(\theta)=\begin{cases}X(t+\theta),& -t\le \theta\ge 0,\cr \phi(t+\theta),& -r\le \theta \le -t.\end{cases}
\end{align*}
The initial conditions $X(0)=\xi\in L^2(\mathcal{F}_0,H)$, $\phi\in L^2_{\mathcal{F}_0}([-r,0],H)$ and $W(t)$ is a real standard Wiener process. Here $F:H\to H$, $B:L^2([-r,0],H)\to H$ are nonlinear Lipschitz continuous and $L,\mathscr{D}:W^{1,2}([-r,0],H)\to H$ are the following Riemann-Stieltjes integrals
\begin{align}\label{RS-opera}L\phi:=\int_{-r}^0 d\mu(\theta)\phi(\theta),\quad\text{and}\quad \mathscr{D}\phi:=\int_{-r}^0 d\nu(\theta)\phi(\theta),\end{align}
where $\nu,\mu:[-r,0]\to \mathcal{L}(H)$  are of bounded variation and non-atomic at zero; that is
$$\lim_{\epsilon\to 0}|\mu|([-\epsilon,0])=0, \qquad \lim_{\epsilon\to 0}|\nu|([-\epsilon,0])=0.$$
\begin{definition}
	A pair $(X(t),X(t+\cdot))$ is a solution of \eqref{app} if there exist extensions $\tilde{\mathscr D}$ and $\tilde{L}$ of $\mathscr D$ and $L$, respectively, such that
	\begin{align*}
		&X_t\in D(\tilde{\mathscr D})\cap D(\tilde{L}) \quad \text{for a.e}~ t\geq0, \mathbb{P}- \text{almost surely},\;and\cr &
		X(t)=\tilde{\mathscr{D}} X_t +T(t)\xi+\int^t_0 T(t-s)F(\tilde{L}X_s)ds+\int^t_0 T(t-s)B(X_s)dW(s).
	\end{align*}
\end{definition}
The equation \eqref{app} was treated by Webb \cite[Section 4]{Web1976}  in the particular case $\mathscr{D}\equiv B\equiv 0$. The method employed consists in constructing a semigroup of nonlinear operators which may be associated with the solutions of this equation.

In order to reformulate the equation \eqref{app} to the equation \eqref{0}, we select
\begin{align}\label{difference-equation}
Z(t):=X(t)-\mathscr{D}X_t,\qquad t\ge 0.
\end{align}
Then the neutral equation \eqref{app} becomes boundary valued stochastic equation
\begin{align}\label{app1}
\begin{cases}
dZ(t)=[AZ(t)+F(LX_t)]dt+B(X_t)dW(t), &t\in[0,T]\\
Z(0)=\xi, \qquad X_0=\phi\\
X(t)=Z(t)+\mathscr{D}X_t,& t\ge 0.
\end{cases}
\end{align}
We now introduce product spaces
\begin{align*}
\mathcal{H}:=H\times L^2([-r,0],H),\qquad \mathcal{Z}:=D(A)\times W^{1,2}([-r,0],H).
\end{align*}
The space $\mathcal{H}$ is endowed with the following norm
\begin{align*}
\left\|(\begin{smallmatrix}x\\ g\end{smallmatrix}) \right\|:=\|x\|_H+\|g\|_{L^2([-r,0],H)}
\end{align*}
On the other hand, we introduce the operators
\begin{align*}
& \mathfrak{A}:=\begin{pmatrix} A& 0\\ 0&\frac{d}{d\theta}\end{pmatrix},\quad D(\mathfrak{A}):=\left\{ (\begin{smallmatrix}x\\ g\end{smallmatrix})\in \mathcal{Z}: \psi(0)=x+\mathscr{D}\psi \right\}\cr & \mathcal{P}:=\begin{pmatrix} 0& L\\ 0&0\end{pmatrix}:D(\mathfrak{A})\to \mathcal{H},\cr &
 Qg=g',\qquad D(Q)=\{g\in W^{1,2}([-r,0],H):g(0)=0\},\cr&
 \mathcal{F}: \mathcal{H}\to \mathcal{H},\quad \mathcal{F}(\begin{smallmatrix}x\\ g\end{smallmatrix})=(\begin{smallmatrix}F(x)\\ 0\end{smallmatrix}),\cr & \mathscr{M}:\mathcal{H}\to \mathcal{H},\quad \mathscr{M}(\begin{smallmatrix}x\\ g\end{smallmatrix})=(\begin{smallmatrix}B(g)\\ 0\end{smallmatrix}).
\end{align*}
Moreover, we introduce the following new state
\begin{align*}
\varrho(t)=\left(\begin{smallmatrix}Z(t)\\ X_t\end{smallmatrix}\right),\qquad t\ge 0.
\end{align*}
With the above notation, the problem \eqref{app1} is reformulated as
\begin{align}\label{app3}
\begin{cases}
d\varrho(t)=[\mathfrak{A}\varrho(t)+\mathcal{F}(\mathcal{P}\varrho(t))]dt+\mathscr{M}(\varrho(t))dW(t), &t\in[0,T]\\
\varrho(0)=(\begin{smallmatrix}\xi\\ \phi\end{smallmatrix}).
\end{cases}
\end{align}
\begin{theorem}
Let assumption \textbf{(S)} be satisfied. The neutral equation \eqref{app} admits a unique solution $(X(t),X(t+\cdot))$ such that
	\begin{align*}
	&X_t\in D(\mathscr{D}_{0,\Lambda})\cap D(L_{0,\Lambda}) \quad \text{for a.e}~ t\geq0, \mathbb{P}- \text{almost surely},\;and\cr &
X(t)=\mathscr{D}_{0,\Lambda} X_t+T(t)\xi+\int^t_0 T(t-s)F(L_{0,\Lambda}X_s)ds+\int^t_0 T(t-s)B(X_s)dW(s),
\end{align*}
where $L_{0,\Lambda}$ and $\mathscr{D}_{0,\Lambda}$ are the Yosida extensions of $L$ and $\mathscr{D}$ with respect to $Q$, respectively.
\end{theorem}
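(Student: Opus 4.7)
The plan is to recast the neutral equation \eqref{app} as an instance of the abstract semilinear problem \eqref{0} on the product space $\mathcal{H} = H \times L^2([-r,0],H)$, exactly as already set up in \eqref{app3}, and then invoke Theorem \ref{thm1}. The solution of \eqref{app} is then recovered by reading off the two components of $\varrho(t) = (Z(t), X_t)^{\top}$.

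First I would show that $(\mathfrak{A}, D(\mathfrak{A}))$ generates a $C_0$-semigroup $\mathfrak{T}(t)$ on $\mathcal{H}$. The natural route is a Greiner-type boundary perturbation: one starts with the decoupled boundary condition $\psi(0) = x$, for which the diagonal operator generates the standard delay semigroup (see e.g.\ B\'atkai--Piazzera), and then views the neutral coupling $\psi(0) = x + \mathscr{D}\psi$ as a boundary perturbation of the shift operator $Q$. The non-atomicity of $\nu$ at zero is precisely what renders $\mathscr{D}$ a zero-class admissible observation for $Q$, and this is what allows the boundary-perturbed operator to remain a generator.

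Next I would verify condition \textbf{(A)} for $\mathcal{P}$ with respect to $\mathfrak{A}$. Since $\mathcal{P}(x,g)^{\top} = (Lg,0)^{\top}$, admissibility reduces to estimating $\int_0^\alpha \|L\,\pi_2\mathfrak{T}(t)(x,g)^{\top}\|^2 dt$, where $\pi_2$ projects onto the history component. Using the explicit structure of $\mathfrak{T}(t)$ on histories (shift plus coupling feedback) together with the non-atomicity of $\mu$ at zero, one obtains the admissibility of $L$ with respect to $Q$ in the style of classical delay-equation results, and this transfers to \textbf{(A)} for $\mathcal{P}$. Condition \textbf{(S)} follows from the first step, while condition \textbf{(L)} for $\mathcal{F}$ and $\mathscr{M}$ is immediate from the Lipschitz assumptions on $F$ and $B$. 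Theorem \ref{thm1} then yields a unique $\varrho \in \mathcal{C}_{\mathcal{F}}([0,T],\mathcal{H})$ with $\varrho(t) \in D(\mathcal{P}_\Lambda)$ a.e., $\mathbb{P}$-a.s., satisfying the abstract variation-of-constants formula with initial datum $(\xi,\phi)^{\top}$.

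The final step is to unpack this componentwise. Writing $\varrho(t) = (Z(t), X_t)^{\top}$, the second component inherits the shift structure of the delay semigroup so that $X_t(\theta) = X(t+\theta)$ for $t+\theta\ge 0$ and $X_t(\theta)=\phi(t+\theta)$ otherwise; the boundary condition in $D(\mathfrak{A})$ passes to the limit as $X(t) = Z(t) + \mathscr{D}_{0,\Lambda}X_t$ by applying $\lambda R(\lambda, Q)$ to the history component and using the definition of the Yosida extension; and the drift $\mathcal{F}(\mathcal{P}_\Lambda \varrho(s))$ collapses to $F(L_{0,\Lambda}X_s)$. Solving for $X(t)$ yields the claimed variation-of-constants formula. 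The main obstacle will be the first step together with this final identification: constructing $\mathfrak{T}(t)$ with a nontrivial boundary coupling driven by an unbounded $\mathscr{D}$ (only non-atomic at zero) is the technical core, and converting the abstract formula on $\mathcal{H}$ into the neutral formula on $H$ requires showing that the boundary traces of the semigroup are realized exactly by $\mathscr{D}_{0,\Lambda}$ and $L_{0,\Lambda}$ acting on the history component of $\varrho(t)$.
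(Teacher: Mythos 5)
Your proposal follows essentially the same route as the paper: reformulate \eqref{app} as \eqref{app3} on $\mathcal{H}$, verify \textbf{(S)}, \textbf{(A)} and \textbf{(L)} for $(\mathfrak{A},\mathcal{P},\mathcal{F},\mathscr{M})$ by exploiting the representation of $\mathfrak{T}$ as shift plus boundary feedback, apply Theorem \ref{thm1}, and read off the two components of $\varrho$. The only difference is one of packaging: the paper imports the generation of $\mathfrak{T}$, the formula \eqref{sg-frakT}, and the admissibility of $L_0$ for $Q$ from \cite{hadd2008feedback} and \cite{HIR-2006} rather than rederiving them by a Greiner-type argument, and it makes your final identification $X(t)=Z(t)+\mathscr{D}_{0,\Lambda}X_t$ rigorous via the admissibility of $\mathcal{C}=\left(I\;\;\mathscr{D}\right)$ for $\mathfrak{A}$ combined with Propositions \ref{5*} and \ref{6*} --- precisely the point you identify as the technical core.
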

\begin{proof}
It is known that (see e.g. \cite{hadd-jfa}, \cite{hadd2008feedback}) that the operator $\mathfrak{A}$ is the generator of a $C_0$--semigroup $(\mathfrak{T}(t))_{t\ge 0}$ on $\mathcal{H}$. On the other hand, as $F:H\to H$ and $B:L^2([-r,0],H)\to H$ are Lipschitz functions, then it is so for the functions $\mathcal{F}$ and $\mathscr{M}$.  Hence the condition ${\bf (L)}$ is satisfied for the equation \eqref{app3}. In order to apply Theorem \ref{thm1}, it is necessary to show that the operators $\mathfrak{A}$ and $\mathcal{P}$ satisfy the condition ${\bf (A)}$.  It is known (see e.g. \cite[Chap.II]{engel2001one}) that $Q$ generates the left shift semigroup $(S(t))_{t\ge 0}$ on $L^2([-r,0],H)$ given by
\begin{align*}
	(S(t)g)(\theta):=
	\begin{cases}
		g(t+\theta), & t+\theta\leq 0,\\
		0, & t+\theta>0,
	\end{cases},
\end{align*}
for  $t\ge 0,$ $\theta\in [-r,0]$, and $g\in L^2([-r,0],H)$. On the other hand, we define the operators
\begin{align*}
	& \mathcal{A}:=\begin{pmatrix}A& 0\\0& Q\end{pmatrix},\qquad D(\mathcal{A})=D(A)\times D(Q),\cr
	& \mathcal{C}:=\left( I\;\;\mathscr{D}\right): D(\mathcal{A})\to H.
\end{align*}
Clearly, $\mathcal{A}$ generates a diagonal strongly continuous semigroup $(\mathcal{T}(t))_{t\ge 0}$  on  $\mathcal{H}$. As $\mathscr{D}_0=\mathscr{D}$ with domain $D(\mathscr{D}_0)=D(Q)$ is admissible for $Q,$ it follows that $\mathcal{C}$ is admissible for $\mathcal{A}$. We denote by $\mathcal{C}_\Lambda$ the Yosida extension of $\mathcal{C}$ with respect to $\mathcal{A}$. Moreover, it is proved in \cite[Theorem 19]{hadd2008feedback}, that ${\rm Im}(\mathfrak{T}(t))\subset D(\mathcal{C}_\Lambda)$ for almost every $t\ge 0,$ and for any $\alpha>0$ there exists a constant $\kappa(\alpha)>0$ such that
\begin{align}\label{estim1}
	\left\|\mathcal{C}_\Lambda\mathfrak{T}(\cdot)(\begin{smallmatrix}x\\ g\end{smallmatrix})\right\|_{L^2([0,\alpha],X)}\le \kappa(\alpha)\left\|(\begin{smallmatrix}x\\ g\end{smallmatrix})\right\|,\qquad \forall (\begin{smallmatrix}x\\ g\end{smallmatrix})\in \mathcal{H}.
\end{align}
Clearly we have
\begin{align}\label{calC-Lambda}
	\mathcal{C}_\Lambda= \left( I\;\;\mathscr{D}_{0,\Lambda}\right),\quad D(\mathcal{C}_\Lambda)=H\times D(\mathscr{D}_{0,\Lambda}),
\end{align}
where $\mathscr{D}_{0,\Lambda}$ is the Yosida extension of $\mathscr{D}$ with respect to $Q$.

In order to give an expression of the semigroup $\mathfrak{T}$ which can help us to prove the admissibility of $\mathcal{P}$ for $\mathfrak{A},$ we select
\begin{align*}
	\left(\Upsilon_t u\right)(\theta)=\begin{cases}u(t+\theta),& t+\theta\ge 0,\cr 0,& t+\theta<0\end{cases},
\end{align*}
for any $t\ge 0,\;\theta\in [-r,0]$ and $u\in L^2(\mathbb{R}^+,H)$.
By using
\cite[Theorem 19]{hadd2008feedback}, the semigroup $\mathfrak{T}$ satisfies
\begin{align}\label{sg-frakT}
	\mathfrak{T}(t)(\begin{smallmatrix}x\\ g\end{smallmatrix})=\begin{pmatrix}T(t)x\\ S(t)g+\Upsilon_t \mathcal{C}_\Lambda\mathfrak{T}(\cdot)(\begin{smallmatrix}x\\ g\end{smallmatrix})\end{pmatrix}
\end{align}
for any $t\ge 0$ and $(\begin{smallmatrix}x\\ g\end{smallmatrix})\in\mathcal{H}$. We select,
\begin{align*}
	L_0:=L_{|D(Q)}.
\end{align*}
Denote by $L_{0,\Lambda}$ the Yosida extension of $L_0$ with respect to $Q$.  According to \cite[Theorem 3]{HIR-2006}, $L_0$ is an admissible operator for $Q,$ ${\rm Im}(\Upsilon_t)\subset D(L_{0,\Lambda})$ and
\begin{align}\label{estim2}
	\|L_{0,\Lambda}\Upsilon_t u\|_{L^2([0,\alpha],H)}\le c_\alpha\|u\|_{L^2([0,t],H)},
\end{align}
for any $\alpha>0,$ $u\in L^2_{loc}(\mathbb{R}^+,H)$ and some constant $c_\alpha>0$. On the other hand, by using \cite[Lemma 3.6]{HMR-2015}, we have $W^{1,2}([-r,0],H)\subset D(L_{0,\Lambda})$ and $L_{0,\Lambda}\equiv L$ on $W^{1,2}([-r,0],H)$. Thus, for any $(\begin{smallmatrix}x\\ g\end{smallmatrix})\in D(\mathfrak{A}),$
\begin{align*}
	\mathcal{P}\mathfrak{T}(t)(\begin{smallmatrix}x\\ g\end{smallmatrix})&=\begin{pmatrix}0&L_{0,\Lambda}\\0&0\end{pmatrix}\mathfrak{T}(t)(\begin{smallmatrix}x\\ g\end{smallmatrix})\cr
	&=\begin{pmatrix}L_{0,\Lambda}S(t)g+L_{0,\Lambda}\Upsilon_t \mathcal{C}_\Lambda\mathfrak{T}(\cdot)(\begin{smallmatrix}x\\ g\end{smallmatrix})\\ 0\end{pmatrix}.
\end{align*}
Thus the admissibility of $\mathcal{P}$ for $\mathfrak{A}$ now follows from the admissibility of $L_0$ for $Q$, and the inequalities \eqref{estim1} and \eqref{estim2}. Thus the condition {\bf (A)} holds as well. Now by Theorem \ref{thm1}, the semi-linear stochastic equation \eqref{app3} (hence the neutral stochastic equation \eqref{app}) has a unique mild solution $\varrho\in \mathcal{C}_\mathcal{F}([0,T];\mathcal{H})$ such that
\begin{align}\label{VCF-varrho}
	\begin{split}
		\varrho(t)=\mathfrak{T}(t)(\begin{smallmatrix}\xi\\ \phi\end{smallmatrix})+\int^t_0 \mathfrak{T}(t-s)&\mathcal{F}(\mathcal{P}_{\Lambda,\mathfrak{A}}\varrho(s))ds\cr &+\int^t_0 \mathfrak{T}(t-s)\mathscr{M}(\varrho(s))dW(s)
	\end{split}
\end{align}
for any $t\ge 0$, $\xi\in L^2(\mathcal{F}_0,H)$ and $\phi\in L^2_{\mathcal{F}_0}([-r,0],H)$.

Let us now give an explicit expression to the first component of $\varrho(t)$. In fact, from \eqref{estim1} we $\mathcal{C}_\Lambda$ is an admissible operator for $\mathfrak{A}$. Thus by using \eqref{VCF-varrho}, Proposition \ref{5*} and Proposition \ref{6*}, we obtain
\begin{align*}
	\left(\begin{smallmatrix}Z(t)\\ X_t\end{smallmatrix}\right)=\varrho(t)\in D(\mathcal{C}_\Lambda)
\end{align*}
for almost every $t\ge 0$ and $\mathbb{P}$ almost surely. On the other hand, the relation \eqref{difference-equation} is a feedback low associated with the system \eqref{app} and can be reformulated as
\begin{align}\label{extended-difference}X(t)=\mathcal{C}_\Lambda \varrho(t)=Z(t)+\mathscr{D}_{0,\Lambda} X_t\end{align}for a.e. $t\ge 0$. Moreover, by using similar arguments as in \cite{hadd2008feedback}, one can see that $\Xi:=H\times [D(L_{0,\Lambda})\cap D(\mathscr{D}_{0,\Lambda}]\subset D(\mathcal{P}_{\Lambda,\mathfrak{A}})$ and
\begin{align}\label{P-lll}
	\mathcal{P}_{\Lambda,\mathfrak{A}}=\begin{pmatrix} 0& L_{0,\Lambda}\\ 0& 0\end{pmatrix}\quad\text{on}\quad \Xi.
\end{align}
Now by combining  \eqref{sg-frakT}, \eqref{extended-difference}, \eqref{P-lll} and \eqref{VCF-varrho}, the first component of $\varrho(t)$ is given by
\begin{align*}
	Z(t)&=X(t)-\mathscr{D}_{0,\Lambda} X_t\cr &=T(t)\xi+\int^t_0 T(t-s)F(L_{0,\Lambda}X_s)ds+\int^t_0 T(t-s)B(X_s)dW(s).
\end{align*}
\end{proof}
\begin{remark}
We mention that the case $\mathscr{D}\equiv 0$ corresponds to the standard delay equations.
\end{remark}

\end{document}